\documentclass[psamsfonts,reqno]{amsart}

\usepackage{amssymb,amsfonts}
\usepackage[all,arc]{xy}
\usepackage{enumerate}
\usepackage{mathrsfs}
\usepackage{hyperref}
\usepackage{mathtools}
\usepackage{comment}
\hypersetup{
	pdftitle={Quantitative Estimates of the Field Excited by an Emitter in the Narrow Region Between Two Strictly Convex Inclusions in Two Dimensions},
	pdfauthor={Saibo Chen},
	pdfsubject={Two-dimensional perfect conductivity problem with a dipole source between closely spaced strictly convex inclusions},
	pdfkeywords={perfect conductivity problem, dipole source, field concentration, harmonic barrier},
	pdfdisplaydoctitle=true
}
\hypersetup{pdfborder={0 0 0}}
\usepackage{bm}
\usepackage{tikz}
\usepackage{pgfplots}
\usetikzlibrary{arrows.meta}

\newtheorem{thm}{Theorem}[section]

\newtheorem{prop}{Proposition}[section]
\newtheorem{lem}{Lemma}[section]
\newtheorem{rk}{Remark}[section]

\theoremstyle{definition}

\theoremstyle{remark}

\makeatletter
\let\c@equation\c@thm
\makeatother
\numberwithin{equation}{section}

\title[Estimates of the Field Excited by an Emitter in Two Dimensions]{Quantitative Estimates of the Field Excited by an Emitter in the Narrow Region Between Two Strictly Convex Inclusions in Two Dimensions}

\author{Saibo Chen}
\address{School of Mathematical Sciences, Fudan University, Shanghai 200433, China}
\email{sbchen25@m.fudan.edu.cn}

\begin{document}
	
	\begin{abstract}
		We consider the field enhancement due to the presence of an emitter of the dipole type near two perfectly conducting inclusions with $C^{2,\gamma}$ boundaries in two dimensions. We derive estimates of field enhancement in the narrow region and prove that the field is enhanced by a factor $\varepsilon^{-1/2}$ in the region far from the emitter. We construct a harmonic barrier to control the dipole term instead of using the Kelvin transformation. We also study how the location of the emitter affects the field on the shortest line segment connecting two inclusions.
	\end{abstract}

	\maketitle
	%\tableofcontents
	\section{Introduction}
	Field concentration in composite materials has been extensively studied in the last two decades. For conductivities bounded away from zero and infinity, uniform gradient estimates were established for scalar equations in \cite{generalk} and extended to elliptic systems in \cite{generalsys}. For perfect conductors, sharp estimates were first obtained for circular inclusions in \cite{Ammari2,Ammari1} and later extended to general shapes and higher dimensions in \cite{perfect-alldim,perfect-dim2-general}. The characterizations of the singular term were developed in \cite{Ammariqqb,KLYP}. Estimates for insulated inclusions were obtained in \cite{insulated-dim2,insulated-alldim general,insulated-alldim,insulated-Harnack,maxprin,Yun Ball}.
	
	Instead of considering field concentration generated by a prescribed background field, a different problem arises when the background field is excited by an emitter in the narrow region between two inclusions. Such a setting is motivated by \cite{phy-emitter}. Kang and Yun studied this problem for the cases when inclusions are of circular shape \cite{emitter1} or of bow-tie shape \cite{emitter2} in two dimensions and of spherical shape in three dimensions \cite{emitter3}.
	
	Before we state our main results, we fix our domain and notation. Let $D_1^0,\ D_2^0\subset\mathbb{R}^2$ be two bounded strictly convex domains with $C^{2,\gamma}(0<\gamma<1)$ boundaries touching at the origin. After a rigid motion, we can suppose  that the common tangent at the origin is the $x_2$-axis, with $D_1^0$ lying to its left and $D_2^0$ to its right. Set
	$$D_1\coloneqq D_1^0-(\varepsilon/2,0),\ \ \ \ D_2\coloneqq D_2^0+(\varepsilon/2,0)$$
	and define
	$$\Omega\coloneqq\mathbb{R}^2\setminus\overline{D_1\cup D_2}.$$
	\begin{figure}[htbp]
		\centering
		\begin{tikzpicture}[
			>=Stealth,
			line width=0.9pt,
			line cap=round,
			line join=round
			]
			\draw[->] (-4.2,0) -- (4.8,0) node[right] {$x_1$};
			\draw[->] (0,-2.0) -- (0,2.2) node[above] {$x_2$};
			\draw (-1.55,0) ellipse [x radius=1.45, y radius=1.05];
			\draw (1.80,0) ellipse [x radius=1.70, y radius=1.40];
			\node at (-2.50,0.35) {$D_1$};
			\node at ( 2.85,0.45) {$D_2$};
			\node at (-0.60,1.02) {$\Gamma_1$};
			\node at ( 0.27,1.02) {$\Gamma_2$};			
		\end{tikzpicture}
		\caption{Two strictly convex inclusions $D_1$ and $D_2$.}
	\end{figure}

	The parts of $\partial D_1$ and $\partial D_2$, denoted by $\Gamma_1$ and $\Gamma_2$, are respectively the graphs of two $C^{2,\gamma}$ functions in terms of $x_2$. Precisely, for some $R_0>0$ independent of $\varepsilon$,
	$$\Gamma_1=\{(x_1,x_2):x_1=-\varepsilon/2+f(x_2),\ |x_2|<2R_0\},$$
	$$\Gamma_2=\{(x_1,x_2):x_1=\varepsilon/2+g(x_2),\ |x_2|<2R_0\},$$
	where $f$ and $g$ are $C^{2,\gamma}$ functions satisfying
	$$f(x_2)=-\frac{\kappa_1}{2}x_2^2+O(|x_2|^{2+\gamma}),\ \ \ \  g(x_2)=\frac{\kappa_2}{2}x_2^2+O(|x_2|^{2+\gamma}).$$
	We further assume that the curvatures of the inclusions at the closest points are strictly positive, which means
	$$\kappa_1=-f''(0)>0,\ \ \ \ \kappa_2=g''(0)>0.$$
	Define the narrow region
	$$\Omega_0\coloneqq\{(x_1,x_2):-\varepsilon/2+f(x_2)<x_1<\varepsilon/2+g(x_2),\ |x_2|<R_0\}.$$
	
	Following the mathematical model introduced by Kang and Yun \cite{emitter1}, we consider the perfect conductivity problem with a dipole emitter:
	\begin{equation}\label{equation of u}
		\begin{cases}
			\Delta u=\mathbf{a}\cdot\nabla\delta_{\mathbf{p}}\ \ {\rm in}\ \Omega,\\
			u=u_i\ \ {\rm on}\ \partial D_i,\ \ i=1,2,\\
			\displaystyle\int_{\partial D_i}\partial_\nu u\ {\rm d}s=0,\ \ i=1,2,\\
			u(\mathbf{x})=O(|\mathbf{x}|^{-1})\ \ {\rm as}\ |\mathbf{x}|\rightarrow\infty.
		\end{cases}
	\end{equation}
	Throughout the paper, $\nu$ denotes the outward unit normal to $\Omega$. The distribution $\mathbf{a}\cdot\nabla\delta_{\mathbf{p}}$ represents the emitter of the dipole type, where the unit vector $\mathbf{a}$ is the direction of the dipole and $\mathbf{p}=(s,t)\in\Omega_0$ its location.
	
	Let $\mathcal{N}_{\mathbf{p}}$ be the fundamental solution to the Laplacian in two dimensions, namely,
	$$\mathcal{N}_{\mathbf{p}}(\mathbf{x})\coloneqq\frac{1}{2\pi}\log{|\mathbf{x-p}|}.$$
	In the absence of inclusions, the solution of $\Delta u=\mathbf{a}\cdot\nabla\delta_{\mathbf{p}}$ is given by
	$$u(\mathbf{x})=\mathbf{a}\cdot\nabla\mathcal{N}_{\mathbf{p}}(\mathbf{x}),$$
	whose gradient has a singularity at $\mathbf{p}$ of size $|\mathbf{x-p}|^{-2}$. For two equal circular inclusions with the emitter on the $x_2$-axis, Kang and Yun \cite{emitter1} showed that the field is enhanced at points away from the emitter, and that the enhancement  factor is $\varepsilon^{-1/2}$. These results rely strongly on the geometry of circles, so Kang and Yun point out that it is interesting to extend the results to general strictly convex inclusions.
	
	In this paper, we consider the problem that arises in \cite{emitter1} and get the same enhancement factor. Similarly to \cite{emitter1}, we decompose the solution into a potential-gap term and a dipole term. The estimate for the potential-gap term is standard. As for the dipole term, we construct a local harmonic barrier to control it. This method does not use the geometry of circles. We cannot guarantee that our method applies to the three-dimensional case, because some key lemmas no longer hold, such as Lemmas 3.2 and 3.3.
	
	Throughout the paper, $D_1^0$ and $D_2^0$ are fixed. The constant $R_0$ may be decreased if necessary. Unless otherwise stated, all constants are uniform, which means they are independent of $\varepsilon,\ \mathbf{x},\ \mathbf{a},\ \mathbf{p}$  but may depend on the fixed domains and parameters. The letter $C$ denotes a uniform constant and may change from line to line. Named constants such as $C_1,\ C_2$ are fixed within each result, but may take different values in different results. We write $X\lesssim Y$ if $X\le CY$, and $X\sim Y$ if $C^{-1}Y\le X\le CY$. Let $B_{\delta}(\mathbf{x})$ denote the open disk of radius $\delta$ centered at $\mathbf{x}$, and write $B_{\delta}\coloneqq B_{\delta}(\mathbf{0})$.
	
	Our first theorem concerns dipole directions with a nonzero horizontal component.
	\setcounter{thm}{0}
	\begin{thm}
		Let $u$ be the solution to (\ref{equation of u}). Fix $M>0$, $a_0\in(0,1)$ and $\theta\in(0,1)$. There exist positive constants $\varepsilon_0,\ C_1,\ C_2,\ A,\ c_0$ such that, for every $0<\varepsilon\leq\varepsilon_0$, every unit vector $\mathbf{a}=(a_1,a_2)$ satisfying $|a_1|\ge a_0$, every $\mathbf{p}=(s,t)\in\Omega_0$ satisfying $|t|\le M\sqrt{\varepsilon}$ and ${\rm dist}(\mathbf{p},\partial\Omega)\ge\theta\varepsilon$, and every $\mathbf{x}\in\Omega_0$, the following estimates hold:
		\setcounter{thm}{1}
		\begin{enumerate}
			\item[\textup{(i)}]
			\textbf{Near the emitter.}
			If $0<|\mathbf{x-p}|\le C_1\varepsilon$, then
			\begin{equation}\label{thm1}
				|\nabla u(\mathbf{x})|\sim\frac{1}{|\mathbf{x-p}|^2}.
			\end{equation}
			\item[\textup{(ii)}]
			\textbf{Far from the emitter.}
			If $|\mathbf{x-p}|\ge C_2\varepsilon|\log{\varepsilon}|$, then for $|x_2|\le c_0\varepsilon^{1/4}$ we have
			\begin{equation}\label{thm2}
				|\nabla u(\mathbf{x})|\sim\frac{|a_1|}{\sqrt{\varepsilon}(\varepsilon+x_2^2)},
			\end{equation}
			while for $|x_2|>c_0\varepsilon^{1/4}$ we have
			\begin{equation}\label{thm3}
				|\nabla u(\mathbf{x})|\lesssim\frac{1}{\sqrt{\varepsilon}(\varepsilon+x_2^2)}.
			\end{equation}
			\item[\textup{(iii)}]
			\textbf{Transition region.}
			If $C_1\varepsilon<|\mathbf{x-p}|<C_2\varepsilon|\log{\varepsilon}|$, then
			\begin{equation}\label{thm4}
				|\nabla u(\mathbf{x})|\lesssim\frac{1}{|\mathbf{x-p}|^2}\exp{\left(-\frac{A|\mathbf{x-p}|}{\varepsilon}\right)}+\varepsilon^{-3/2}.
			\end{equation}
		\end{enumerate}
	\end{thm}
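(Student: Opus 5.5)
We split $u$ as in \cite{emitter1} into a dipole term and a potential-gap term:
$$u=v+(u_1-u_2)\,w+u_2 ,$$
where $w$ is the standard capacitary potential ($w=1$ on $\partial D_1$, $w=0$ on $\partial D_2$, bounded at infinity). The function $v$ solves $\Delta v=\mathbf{a}\cdot\nabla\delta_{\mathbf{p}}$ in $\Omega$ with $v=0$ on $\partial D_1\cup\partial D_2$ and $v$ decaying at infinity.

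\textbf{The term $w$.} By the known results for perfect conductors \cite{perfect-dim2-general,KLYP},
$$|\nabla w(\mathbf{x})|\sim\frac{1}{\varepsilon+x_2^2}\quad\text{in }\Omega_0 .$$
Its main part is $\partial_{x_1}w\approx -1/\big(\varepsilon+f_\ast(x_2)\big)$, where $f_\ast$ denotes the local gap function, so $\partial_{x_1}w$ has a definite sign. The potential gap is fixed by the flux conditions. Green's identity with $w$ gives
$$u_1-u_2=\frac{\mathbf{a}\cdot\nabla\tilde w(\mathbf{p})}{\int_{\partial D_1}\partial_\nu w\,ds}\quad\text{up to sign},$$
where $\tilde w$ is the harmonic function vanishing on both inclusions with the appropriate normalization. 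Since $\int_{\partial D_1}\partial_\nu w\sim\varepsilon^{-1/2}$ and $\nabla w(\mathbf{p})\approx(-1/(\varepsilon+t^2),0)+O(\text{lower})$, we get
$$u_1-u_2\sim a_1\,\varepsilon^{1/2}\cdot\frac{1}{\varepsilon+t^2}\cdot(\text{const}),$$
which is $\sim a_1\varepsilon^{-1/2}$ once $|t|\le M\sqrt\varepsilon$ and $|a_1|\ge a_0$. The term $a_2\partial_2 w(\mathbf{p})$ is of lower order, $O(|t|/\varepsilon+\varepsilon^{-1/2})$, and has to be absorbed. This yields the factor $|a_1|/\big(\sqrt\varepsilon(\varepsilon+x_2^2)\big)$ in (ii). For the lower bound when $|x_2|\le c_0\varepsilon^{1/4}$, we use that the $x_1$-component of $\nabla w$ dominates the error terms there; beyond that range we only keep the upper bound (\ref{thm3}).

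\textbf{The dipole term $v$.} This is the main obstacle. Write $v=\mathbf{a}\cdot\nabla\mathcal N_{\mathbf{p}}+h$ near $\mathbf{p}$, where $h$ is harmonic with boundary data $-\mathbf{a}\cdot\nabla\mathcal N_{\mathbf{p}}=O(\varepsilon^{-1})$, using $\mathrm{dist}(\mathbf{p},\partial\Omega)\ge\theta\varepsilon$.
\begin{enumerate}
\item[\textup{(a)}] Near the emitter, $|\nabla h|\lesssim\varepsilon^{-2}$ by the maximum principle and interior/boundary gradient estimates at scale $\varepsilon$. This gives (i) for $|\mathbf{x}-\mathbf{p}|\le C_1\varepsilon$ with $C_1$ small, because the dipole part $\sim|\mathbf{x}-\mathbf{p}|^{-2}$ dominates both $\nabla h$ and the gap term $\lesssim\varepsilon^{-3/2}$.
\item[\textup{(b)}] Away from $\mathbf{p}$ we need exponential decay of $v$ along the narrow strip. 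Since $v=0$ on both walls and $|v|\lesssim\varepsilon^{-1}$ on $\partial B_{C\varepsilon}(\mathbf{p})$, we build a harmonic (or superharmonic) barrier in the neck of the form
$$\Phi=\varepsilon^{-1}e^{-A|x_2-t|/\varepsilon'}\cos\!\Big(\frac{\pi(x_1-\text{mid}(x_2))}{2\,\text{width}(x_2)}\Big),$$
with local width $\varepsilon'\approx\varepsilon+x_2^2$. The condition $|t|\le M\sqrt\varepsilon$ keeps the width comparable to $\varepsilon$ near $\mathbf{p}$. The comparison principle on $\Omega_0\setminus B_{C\varepsilon}(\mathbf{p})$ then gives
$$|v|\lesssim\varepsilon^{-1}e^{-A|\mathbf{x}-\mathbf{p}|/\varepsilon}.$$
Outside $\Omega_0$ we use the decay at infinity together with a crude bound on $\partial\Omega_0$. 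Checking that the barrier is superharmonic requires controlling the derivatives of the curved walls via $C^{2,\gamma}$ regularity. This is where the two-dimensional lemmas the paper mentions enter: the flattening maps and the strip-decay estimates.
\item[\textup{(c)}] Rescaled gradient estimates in boxes of size comparable to the local width turn this into
$$|\nabla v|\lesssim|\mathbf{x}-\mathbf{p}|^{-2}e^{-A|\mathbf{x}-\mathbf{p}|/\varepsilon}.$$
\end{enumerate}

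\textbf{Assembling the regions.} Combining gives (iii) directly. For (ii), take $C_2$ so large that the dipole contribution $\le\varepsilon^{-2}e^{-AC_2|\log\varepsilon|}\le\varepsilon^{N}$ is negligible compared with $|a_1|\varepsilon^{-1/2}(\varepsilon+x_2^2)^{-1}\gtrsim\varepsilon^{-1/2}$. The field is then governed by the gap term, which gives the two-sided estimate (\ref{thm2}) and the upper bound (\ref{thm3}).
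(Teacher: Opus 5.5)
Your gap-term analysis matches the paper's up to normalization. Since $q=q_2+(q_1-q_2)w$, your $(u_1-u_2)w+u_2$ equals $Q=cq-v_0$. Your flux identity is Lemma 3.5, but the numerator must be $\mathbf{a}\cdot\nabla w(\mathbf{p})$ with $w$ itself: a bounded harmonic function vanishing on both inclusions is identically zero. Your $v$ is the paper's $r$, which tends to a generally nonzero constant $v_0$ at infinity rather than decaying. Part (i), part (iii), and part (ii) for $|\mathbf{x}-\mathbf{p}|\le 2\sqrt{\varepsilon}$ are fine in principle. There $|x_1|\lesssim\varepsilon$ (because $|t|\le M\sqrt{\varepsilon}$), and the paper simply uses the exactly harmonic flat barrier $\cos(\lambda x_1/\varepsilon)\cosh(\lambda(x_2-x_{02})/\varepsilon)$ on a strip centered at $x_{02}$ that excludes $\mathbf{p}$. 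Its $\cosh$ factor beats the crude bound $|r|\lesssim\varepsilon^{-1}$ on both edges, so no curved-wall computation is needed.

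The genuine gap is step (b) for $|\mathbf{x}-\mathbf{p}|\gg\sqrt{\varepsilon}$.
\begin{itemize}
\item \textbf{The claimed bound is false.} The estimate $|v|\lesssim\varepsilon^{-1}e^{-A|\mathbf{x}-\mathbf{p}|/\varepsilon}$ cannot hold on all of $\Omega_0\setminus B_{C\varepsilon}(\mathbf{p})$. The decay rate per unit length is comparable to the reciprocal of the local width $\varepsilon+x_2^2$, and $\int_{\mathbb{R}}ds/(\varepsilon+s^2)=\pi/\sqrt{\varepsilon}$. So the accumulated decay never drops below $e^{-C/\sqrt{\varepsilon}}$; for disks, a M\"obius map sends $\Omega$ onto an annulus of logarithmic width $\sim\sqrt{\varepsilon}$ and this is explicit. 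Where $|x_2|$ is of order one, $|v|$ is generically of size $e^{-C/\sqrt{\varepsilon}}$ up to powers of $\varepsilon$, not $e^{-AR_0/\varepsilon}$.
\item \textbf{Your barrier cannot give it either.} For $|x_2|\gg\sqrt{\varepsilon}$ its exponent $|x_2-t|/\varepsilon'$ is about $1/|x_2|$. More fundamentally, any comparison on $\Omega_0\setminus B_{C\varepsilon}(\mathbf{p})$ needs the barrier to dominate the crude bound $|v|\lesssim\varepsilon^{-1}$ on the sides $|x_2|=R_0$. Decay over an $O(1)$ distance in a channel of width $O(1)$ only gives an $O(1)$ factor, so no $\varepsilon$-smallness survives where $|x_2|$ is of order one. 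There you only get $|\nabla v|\lesssim\varepsilon^{-1}(\varepsilon+x_2^2)^{-1}$, which exceeds the main term $|a_1|\varepsilon^{-1/2}(\varepsilon+x_2^2)^{-1}$.
\end{itemize}
So (ii) is unproved in the wide part of $\Omega_0$.

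The missing idea is to stop the barrier at scale $\sqrt{\varepsilon}$ and finish with a global argument.
\begin{itemize}
\item The barrier gives $|r|\lesssim e^{-A_1/\sqrt{\varepsilon}}$ on the arcs $\{|\mathbf{x}-\mathbf{p}|=\sqrt{\varepsilon}\}\cap\Omega$.
\item Since $r=0$ on $\partial D_1\cup\partial D_2$ and $r\to v_0$ at infinity, the exterior maximum principle (Lemma 3.3) on $\{\mathbf{x}\in\Omega:|\mathbf{x}-\mathbf{p}|>\sqrt{\varepsilon}\}$ propagates this bound to the whole region.
\item Gradient estimates at scale $\varepsilon$ then give $|\nabla r|\lesssim\varepsilon^{-1}e^{-A_1/\sqrt{\varepsilon}}\ll\varepsilon^{-1/2}$, which is all (ii) requires.
\end{itemize}
The two-dimensional lemmas the paper alludes to are Lemma 3.2 and this Lemma 3.3, not flattening or strip-decay results.
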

	\begin{rk}
		Estimate (\ref{thm2}) shows the enhancement factor is $\varepsilon^{-1/2}$, which is the same as in \cite{emitter1}. Indeed, suppose $2M\sqrt{\varepsilon}\le|x_2|\le 3M\sqrt{\varepsilon}$ (the constant $3M$ is not essential). Since $|t|\le M\sqrt{\varepsilon}$, we have
		$$|\mathbf{x-p}|^2\lesssim(\varepsilon+x_2^2)^2+(\varepsilon+t^2)^2+x_2^2+t^2\lesssim\varepsilon+x_2^2$$
		and
		$$\varepsilon+x_2^2\lesssim\varepsilon\lesssim (x_2-t)^2\lesssim|\mathbf{x-p}|^2.$$
		Therefore,
		$$|\mathbf{x-p}|^2\sim\varepsilon+x_2^2.$$
		In the absence of inclusions, the field satisfies
		$$|\nabla(\mathbf{a}\cdot\nabla\mathcal{N}_{\mathbf{p}})(\mathbf{x})|\sim\frac{1}{\varepsilon+x_2^2},$$
		whereas (\ref{thm2}) gives
		$$|\nabla u(\mathbf{x})|\sim\varepsilon^{-1/2}|\nabla(\mathbf{a}\cdot\nabla\mathcal{N}_{\mathbf{p}})(\mathbf{x})|.$$
	\end{rk}
	We next consider the dipole with vertical direction $\mathbf{a}=(0,1)$. The case $\mathbf{a}=(0,-1)$ follows by linearity.
	\setcounter{thm}{1}
	\begin{thm}
		\setcounter{thm}{5}
		Let $u$ be the solution to (\ref{equation of u}). Fix $M>0,\ \theta\in(0,1)$ and let $\mathbf{a}=(0,1)$. There exist positive constants $\varepsilon_0,\ C_1,\ C_2,\ A$ such that, for every $0<\varepsilon\le\varepsilon_0$, every $\mathbf{p}=(s,t)\in\Omega_0$ satisfying $|t|\le M\sqrt{\varepsilon}$ and ${\rm dist}(\mathbf{p},\partial\Omega)\ge\theta\varepsilon$, and every $\mathbf{x}\in\Omega_0$, the following estimates hold:
		\begin{enumerate}
			\item[\textup{(i)}]
			\textbf{Near the emitter.}
			If $0<|\mathbf{x-p}|\le C_1\varepsilon$, then
			\begin{equation}\label{thm11}
				|\nabla u(\mathbf{x})|\sim\frac{1}{|\mathbf{x-p}|^2}.
			\end{equation}
			\item[\textup{(ii)}]
			\textbf{Far from the emitter.}
			If $|\mathbf{x-p}|\ge C_2\varepsilon|\log{\varepsilon}|$, then
			\begin{equation}\label{thm22}
				|\nabla u(\mathbf{x})|\lesssim\frac{1}{\varepsilon+x_2^2}.
			\end{equation}
			\item[\textup{(iii)}]
			\textbf{Transition region.}
			If $C_1\varepsilon<|\mathbf{x-p}|<C_2\varepsilon|\log{\varepsilon}|$, then
			\begin{equation}\label{thm33}
				|\nabla u(\mathbf{x})|\lesssim\frac{1}{|\mathbf{x-p}|^2}\exp{\left(-\frac{A|\mathbf{x-p}|}{\varepsilon}\right)}+\varepsilon^{-1}.
			\end{equation}
		\end{enumerate}
	\end{thm}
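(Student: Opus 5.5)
We will follow the same decomposition as in Theorem 1.1, but exploit the symmetry of the vertical dipole to see that the potential-gap term is much smaller. We write
$$u=(u_1-u_2)\,v+w+\text{const},$$
where $v$ solves the standard potential-gap problem. That is, $v$ is harmonic in $\Omega$, with $v=1$ on $\partial D_1$, $v=0$ on $\partial D_2$, and $v$ bounded at infinity (up to the normalization at infinity). The term $w$ is the dipole part: it has the singularity $\mathbf{a}\cdot\nabla\mathcal{N}_{\mathbf{p}}$ and takes the same constant value on both $\partial D_i$. For $v$ we will use the known estimate in the narrow region,
$$|\nabla v(\mathbf{x})|\sim\frac{1}{\varepsilon+x_2^2}.$$
For $w$ we will reuse the harmonic barrier constructed for the proof of Theorem 1.1. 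It gives the near-field behaviour $|\nabla w|\sim|\mathbf{x-p}|^{-2}$ for $|\mathbf{x-p}|\le C_1\varepsilon$. It also gives exponential decay $|\mathbf{x-p}|^{-2}e^{-A|\mathbf{x-p}|/\varepsilon}$ of the dipole part away from $\mathbf{p}$ inside $\Omega_0$, plus a bounded remainder of size at most $O(1/(\varepsilon+x_2^2))$ coming from the outer region. These bounds hold for any unit $\mathbf{a}$, so they apply here.

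The key new step is estimating the gap $u_1-u_2$. We determine it from the flux conditions through the pairing identity
$$u_1-u_2=\frac{\int_{\partial D_1}\partial_\nu w\,ds}{\int_{\partial D_1}\partial_\nu v\,ds}.$$
By Green's identity, the numerator equals, up to a harmless constant, $\mathbf{a}\cdot\nabla \tilde v(\mathbf{p})$, where $\tilde v$ is the normalized capacity potential. The denominator is $\sim\varepsilon^{-1/2}$. In Theorem 1.1 the numerator is $\sim a_1/\varepsilon$, which gives $|u_1-u_2|\sim\varepsilon^{-1/2}$. For $\mathbf{a}=(0,1)$ the numerator is $\partial_2 v(\mathbf{p})$ instead. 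In the narrow region $v$ is approximated by the linear profile $(\varepsilon/2+g(x_2)-x_1)/(\varepsilon+h(x_2))$, whose $x_2$-derivative is $O\big(|x_2|/(\varepsilon+x_2^2)\big)$. At $|t|\le M\sqrt\varepsilon$ this gives $|\partial_2 v(\mathbf{p})|\lesssim \varepsilon^{-1/2}$, with the correction to the linear profile controlled by the standard gradient estimates. Hence $|u_1-u_2|\lesssim 1$, and the gap term contributes at most $C/(\varepsilon+x_2^2)$.

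The three estimates then follow by combining the pieces.
\begin{enumerate}
\item[(i)] Near the emitter, the dipole singularity $\sim|\mathbf{x-p}|^{-2}\gtrsim\varepsilon^{-2}$ dominates the $O(\varepsilon^{-1})$ gap term.
\item[(ii)] Far from the emitter, once $|\mathbf{x-p}|\ge C_2\varepsilon|\log\varepsilon|$ with $C_2$ large, the exponential factor is at most $\varepsilon^{2}$. This leaves only terms of size $\lesssim 1/(\varepsilon+x_2^2)$.
\item[(iii)] In the transition region we add the two bounds, using $1/(\varepsilon+x_2^2)\le\varepsilon^{-1}$.
\end{enumerate}

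The main obstacle is proving $|\partial_2 v(\mathbf{p})|\lesssim\varepsilon^{-1/2}$ with the required uniformity. This needs a sharp pointwise estimate of $\partial_2 v$, not just of $|\nabla v|$, at interior points a distance $\ge\theta\varepsilon$ from the boundary. I would first derive it by comparing $v$ with the explicit linear profile in the narrow region. I would then bound the difference by rescaling a thin neighbourhood of $\mathbf{p}$ of width $\varepsilon$ and applying interior and boundary $C^{1,\gamma}$ estimates. This argument uses that the error is $O(1)$ in $C^{2}$-type norms after rescaling.
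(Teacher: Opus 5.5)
Your decomposition is the paper's up to normalization. Your $v$ equals $(q-q_2)/(q_1-q_2)$, and your $w$ is the paper's $r$ up to a constant. Your pairing identity is Lemma 3.5 rewritten with $\int_{\partial D_1}\partial_\nu v=-1/(q_1-q_2)\sim\varepsilon^{-1/2}$. The bound you single out, $|\partial_2 v(\mathbf p)|\lesssim\varepsilon^{-1/2}$, is exactly the paper's $|\partial_{x_2}q(\mathbf p)|\lesssim 1$, i.e.\ $|c|\lesssim\varepsilon^{-1/2}$ in Proposition 3.2. The paper gets it from $q=a_\varepsilon q_B+v_1$ with $\|\nabla v_1\|_{L^\infty}\le C$ (Lemma 3.2, a cited global result), plus an explicit computation of $\partial_{x_2}q_B(\mathbf p)$ for the osculating-disk potential.

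\textbf{The gap.} The argument you actually propose for this step does not deliver it. Rescaling $B_{C\varepsilon}(\mathbf p)\cap\Omega$ to unit size and using interior/boundary $C^{1,\gamma}$ estimates gives $|\nabla(v-\bar v)(\mathbf p)|\lesssim\varepsilon^{-1}\|v-\bar v\|_{L^\infty(B_{C\varepsilon}(\mathbf p)\cap\Omega)}+\varepsilon\|\Delta\bar v\|_{L^\infty}$. A priori you only know $|v-\bar v|\le 1$, while $|\Delta\bar v|\lesssim\varepsilon^{-1}$ there. So any $O(1)$ bound on the rescaled error, in any norm, yields only $|\partial_2 v(\mathbf p)|\lesssim\varepsilon^{-1}$. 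That gives $|u_1-u_2|\lesssim\varepsilon^{-1/2}$ and a gap term of size $\varepsilon^{-1/2}(\varepsilon+x_2^2)^{-1}$, which is the Theorem 1.1 bound, not (ii). The $C^2$ control of the rescaled boundaries only makes the constants uniform; it cannot produce the missing factor $\sqrt{\varepsilon}$. That factor needs non-local information, such as $\|v-\bar v\|_{L^\infty}\lesssim\sqrt{\varepsilon}$ near $\mathbf p$. This is what the global energy-plus-iteration estimate $|\nabla(v-\bar v)|\le C$ (Bao--Li--Yin type) supplies, as does the paper's Lemma 3.2. If you cite one of these, your linear-profile route works.

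\textbf{A self-contained fix.} You can close the step using only the upper bound $|\nabla v|\lesssim(\varepsilon+x_2^2)^{-1}$ you already assume. This bound holds up to $\Gamma_1\cup\Gamma_2$, as in the proof of Proposition 3.1.
\begin{enumerate}
\item[(a)] The function $\partial_2 v$ is harmonic in $\Omega_0$ and continuous on $\overline{\Omega_0}$.
\item[(b)] On $\Gamma_1$ the tangential derivative of $v$ vanishes, so $\partial_2 v=-f'(x_2)\,\partial_1 v$. Hence $|\partial_2 v|\lesssim|x_2|/(\varepsilon+x_2^2)\lesssim\varepsilon^{-1/2}$ there. The same holds on $\Gamma_2$ with $g'$.
\item[(c)] On the sides $|x_2|=R_0$ one has $|\partial_2 v|\lesssim 1$.
\item[(d)] The maximum principle then gives $|\partial_2 v|\lesssim\varepsilon^{-1/2}$ on all of $\Omega_0$, in particular at $\mathbf p$.
\end{enumerate}
This needs neither a corrector estimate nor osculating disks. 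With it in place, your combination of the pieces in (i)--(iii) is the same as the paper's.
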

	\begin{rk}
		The estimates in Theorem 1.2 can be seen as improvements over Theorem 1.1 due to the zero horizontal component of $\mathbf{a}$, but they are still weaker than the results in \cite{emitter1} because of the lack of symmetry. When $D_1\cup D_2$ is symmetric with respect to the $x_2$-axis and $\mathbf{p}=(0,t)\in\Omega_0$, we can get similar results to those in \cite{emitter1}. See Remark 4.1.
	\end{rk}
	Theorems 1.1 and 1.2 concern emitters satisfying $|p|\le M\sqrt{\varepsilon}$. We next consider emitters that are farther from the origin. Let
	$$L\coloneqq\{(x_1,0):|x_1|<\varepsilon/2\}$$
	be the shortest line segment connecting two inclusions. We quantify how the location of the emitter affects the field on $L$.
	\setcounter{thm}{2}
	\begin{thm}
		\setcounter{thm}{8}
		Let $u$ be the solution to (\ref{equation of u}), and fix $a_0\in(0,1)$. There exist positive constants $\varepsilon_0,\ A$ and $t_0$ with $t_0<R_0/4$ such that, for every $0<\varepsilon\le\varepsilon_0$, every unit vector $\mathbf{a}=(a_1,a_2)$ satisfying $|a_1|\ge a_0$ and every $\mathbf p=(s,t)\in\Omega_0$ satisfying $4\sqrt{\varepsilon}<|t|\le t_0$, we have
		\begin{equation}\label{location estimate1}
			|\nabla u(\mathbf{x})|\lesssim\frac{1}{\sqrt{\varepsilon}(\varepsilon+t^2)}+\frac{1}{\varepsilon{\rm dist}(\mathbf{p},\partial\Omega)}\exp{\left(-\frac{A}{\sqrt{\varepsilon}}\right)},\ \mathbf{x}\in L.
		\end{equation}
		In particular, for every fixed $\theta\in(0,1)$, if ${\rm dist}(\mathbf{p},\partial\Omega)\ge\theta\varepsilon$, we have
		\begin{equation}\label{location estimate2}
			|\nabla u(\mathbf{x})|\lesssim\frac{1}{\sqrt{\varepsilon}(\varepsilon+t^2)},\ \mathbf{x}\in L.
		\end{equation}
	\end{thm}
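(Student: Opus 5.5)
Following the strategy already used for Theorems 1.1 and 1.2, I will split $u=(u_1-u_2)v+w$. Here $v$ is the potential-gap profile: it solves $\Delta v=0$ in $\Omega$, with $v=1$ on $\partial D_1$, $v=0$ on $\partial D_2$, and $v$ bounded at infinity (normalized appropriately). The remaining dipole part $w$ solves $\Delta w=\mathbf{a}\cdot\nabla\delta_{\mathbf p}$ in $\Omega$, with $w$ equal to a common constant on $\partial D_1\cup\partial D_2$ and with decay at infinity. For $v$ the standard two-dimensional estimates apply: $|\nabla v(\mathbf x)|\sim(\varepsilon+x_2^2)^{-1}$ in $\Omega_0$, and $\int_{\partial D_1}\partial_\nu v\,ds\sim\varepsilon^{-1/2}$. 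On $L$ this gives $|\nabla v|\sim\varepsilon^{-1}$. The flux condition $\int_{\partial D_1}\partial_\nu u=0$ then forces
$$u_1-u_2=-\frac{\int_{\partial D_1}\partial_\nu w\,ds}{\int_{\partial D_1}\partial_\nu v\,ds}.$$
So on $L$ the estimate reduces to two bounds: (a) $|\int_{\partial D_1}\partial_\nu w|\lesssim\varepsilon^{-1/2}\cdot\varepsilon\cdot\frac{1}{\sqrt\varepsilon(\varepsilon+t^2)}=\frac{1}{\varepsilon+t^2}$, which makes the first part contribute $|u_1-u_2|\,|\nabla v|\lesssim\frac{1}{\sqrt\varepsilon(\varepsilon+t^2)}$; and (b) $|\nabla w|$ on $L$ is bounded by the exponentially small term in (\ref{location estimate1}).

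For (a), I use Green's identity with $v$. Pairing $w$ against $v$ on $\Omega$ with the boundary data of $w$ gives $\int_{\partial D_1}\partial_\nu w\,ds=\pm\,\mathbf a\cdot\nabla v(\mathbf p)$ up to terms coming from the constant boundary value of $w$, and the flux-free structure removes those. Hence $|\int_{\partial D_1}\partial_\nu w|\lesssim|\nabla v(\mathbf p)|\lesssim(\varepsilon+t^2)^{-1}$, since in $\Omega_0$ the narrow-region width at height $t$ is $\sim\varepsilon+t^2$. This is exactly the bound required. The hypothesis $|a_1|\ge a_0$ is not needed for this upper bound; it only matters for the matching lower bounds elsewhere.

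For (b), I need the dipole field to decay exponentially along the channel. The singular part of $w$ is localized at height $t$, where the channel width is $\delta(t)\sim\varepsilon+t^2$. Since $w$ is harmonic in $\Omega_0\setminus\{\mathbf p\}$ and constant on both walls, the local harmonic barrier from the earlier sections (Lemmas 3.2 and 3.3) should give decay like $\exp(-c\int d x_2/\delta(x_2))$ as one moves vertically from height $t$ toward $x_2=0$. With $|t|>4\sqrt\varepsilon$, the integral $\int_0^{|t|}\frac{dx_2}{\varepsilon+x_2^2}\ge\int_{\sqrt\varepsilon}^{4\sqrt\varepsilon}\frac{dx_2}{2x_2^2}\gtrsim\varepsilon^{-1/2}$, which produces the factor $\exp(-A/\sqrt\varepsilon)$. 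The prefactor comes from the size of $w$ at distance $\sim\delta(t)$ from the emitter. Before the barrier takes over, this is $\lesssim\frac{1}{{\rm dist}(\mathbf p,\partial\Omega)}$, since the dipole potential near a grounded wall is controlled by the inverse distance to the wall. Along $L$, interior gradient estimates in a strip of width $\varepsilon$ then convert the potential bound into a gradient bound with an extra factor $\varepsilon^{-1}$. This yields $\frac{1}{\varepsilon\,{\rm dist}(\mathbf p,\partial\Omega)}e^{-A/\sqrt\varepsilon}$.

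The main obstacle is carrying out (b) rigorously. It requires iterating the barrier across dyadic-in-$\delta$ cells from height $t$ down to $0$, with uniform constants, while the width changes from $t^2$ to $\varepsilon$. Taking $t_0$ small keeps the graphs close to parabolas, and the decay integral above survives. Finally, (\ref{location estimate2}) follows from (\ref{location estimate1}): if ${\rm dist}\ge\theta\varepsilon$, the second term is at most $\theta^{-1}\varepsilon^{-2}e^{-A/\sqrt\varepsilon}$. This is $\ll\varepsilon^{-1/2}(\varepsilon+t^2)^{-1}$ because $t\le t_0$ forces $(\varepsilon+t^2)^{-1}\gtrsim1$, and $\varepsilon^{-2}e^{-A/\sqrt\varepsilon}\to0$.
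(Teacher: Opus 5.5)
Your decomposition is the paper's up to normalization. Indeed $(u_1-u_2)\nabla v=c\nabla q$, and your $w$ differs from the paper's $r=\mathbf a\cdot\nabla\mathcal N_{\mathbf p}+v$ only by a constant. (With your normalizations, $u$ and $(u_1-u_2)v+w$ also differ by an additive constant, which does not affect $\nabla u$.) Step (a) is correct and is the paper's reciprocity argument in another form. Your Green-identity formula for $\int_{\partial D_1}\partial_\nu w\,ds$ in terms of $\mathbf a\cdot\nabla v(\mathbf p)$ is equivalent to Lemma 3.5, $u_2-u_1=\mathbf a\cdot\nabla q(\mathbf p)$. Combined with $|\nabla q|\lesssim\sqrt\varepsilon/(\varepsilon+x_2^2)$ and $|q_2-q_1|\sim\sqrt\varepsilon$, it gives the first term of the estimate exactly as in the paper. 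Your deduction of the second estimate from the first is also fine.

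The gap is step (b), which you leave open yourself. You propose a dyadic barrier iteration from height $t$ down to $x_2=0$, started next to the singularity and run with uniform constants while the width drops from $t^2$ to $\varepsilon$. None of this is carried out, so as written the exponentially small term is unproved.

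The missing observation is that nothing above height $\sqrt\varepsilon$ is needed. Your own integral shows that the exponent $\varepsilon^{-1/2}$ comes from a vertical stretch of length about $\sqrt\varepsilon$ where the width is $\sim\varepsilon$. So work only in $\hat S_1=\{\mathbf x\in\Omega_0:|x_2|<\sqrt\varepsilon\}$, where the width is uniformly $\lesssim\varepsilon$.

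\textbf{Step 1.} Since $|t|>4\sqrt\varepsilon$, we have $\mathbf p\notin\overline{\hat S_1}$. Hence $w-\kappa$, with $\kappa$ the wall value of $w$, is harmonic in $\hat S_1$ and vanishes on the walls.

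\textbf{Step 2.} On the edges $|x_2|=\sqrt\varepsilon$ you have $|\mathbf x-\mathbf p|\ge|t|-\sqrt\varepsilon\gtrsim|t|\gtrsim\varepsilon+t^2\gtrsim{\rm dist}(\mathbf p,\partial\Omega)$, using $|t|\le t_0$. Apply the exterior maximum principle (Lemma 3.3) to $w-\kappa-\mathbf a\cdot\nabla\mathcal N_{\mathbf p}$. This gives, on those edges,
\[
|w-\kappa|\le\frac{1}{2\pi|\mathbf x-\mathbf p|}+\frac{1}{2\pi\,{\rm dist}(\mathbf p,\partial\Omega)}\lesssim\frac{1}{{\rm dist}(\mathbf p,\partial\Omega)}.
\]

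\textbf{Step 3.} Take the single barrier
\[
h_0=\frac{\bar C}{{\rm dist}(\mathbf p,\partial\Omega)}\,e^{-\mu/\sqrt\varepsilon}\cos\frac{\mu x_1}{\varepsilon}\cosh\frac{\mu x_2}{\varepsilon},
\]
with $\mu$ small so that $\cos(\mu x_1/\varepsilon)>1/2$ on $\hat S_1$, and $\bar C$ large. Since $e^{-\mu/\sqrt\varepsilon}\cosh(\mu/\sqrt\varepsilon)>1/2$, $h_0$ dominates $|w-\kappa|$ on $\partial\hat S_1$. By the maximum principle it dominates $|w-\kappa|$ throughout $\hat S_1$.

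\textbf{Step 4.} For $|x_2|<\varepsilon$ the barrier is $\lesssim{\rm dist}(\mathbf p,\partial\Omega)^{-1}e^{-\mu/\sqrt\varepsilon}$. A gradient estimate at scale $\varepsilon$ then yields the exponential term on $L$.

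This is the paper's proof. The iteration you describe, even if completed, would at most improve the constant $A$, which the theorem does not require.
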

	\begin{rk}
		If $\mathbf{a}=(0,1)$, we can also get the corresponding estimates. We omit these results since the proof is the same as that of Theorem 1.3. Consider the case ${\rm dist}(\mathbf{p},\partial\Omega)\ge\theta\varepsilon$ and $|a_1|\ge a_0$. If the emitter is located away from the origin such that $|t|$ is bounded below independently of $\varepsilon$, then by Theorem 1.3, for $\mathbf{x}\in L$ we have $|\nabla u|\lesssim\varepsilon^{-1/2}$. This is the same order as in the perfect conductivity problem without an emitter. If $|t|\sim\sqrt{\varepsilon}$, Theorem 1.3 implies $|\nabla u|\lesssim\varepsilon^{-3/2}$, which is consistent with (\ref{thm2}). Moreover, when $D_1$ and $D_2$ are disks of the same radius, the upper bound in (\ref{location estimate2}) is sharp. See Remark 5.1.
	\end{rk}
	
	The rest of this paper is organized as follows. In Section 2, we introduce auxiliary functions and decompose the solution into two components. In Section 3, we derive estimates for these components. In Section 4, we prove Theorems 1.1 and 1.2. In Section 5, we prove Theorem 1.3.

	\section{Auxiliary Functions and Decomposition of the Solution}
	The decomposition used in this section follows the framework introduced by Kang and Yun \cite{emitter1} for two circular inclusions.
	
	Let $q$ be the solution of the following problem:
	\begin{equation}\label{equation of q}
		\begin{cases}
			\Delta q=0\ \ {\rm in}\ \Omega,\\
			q=q_i\ \ {\rm on}\ \partial D_i,\ \ i=1,2,\\
			\displaystyle\int_{\partial D_i}\partial_\nu q\ {\rm d}s=(-1)^i,\ \ i=1,2,\\
			q(\mathbf{x})=O(|\mathbf{x}|^{-1})\ \ {\rm as}\ |\mathbf{x}|\rightarrow\infty.
		\end{cases}
	\end{equation}
	Here, $q_1$ and $q_2$ are constants determined by the third condition in (\ref{equation of q}) and depend on $\varepsilon$. The existence and uniqueness of the solution to (\ref{equation of q}) were established in \cite{Ammariqqb}.
	
	Let $z_1\in\partial D_1$ and $z_2\in\partial D_2$ be the closest points, namely, ${\rm dist}(D_1,D_2)=|z_1-z_2|$. Let $B_i$ be the osculating disk of $D_i$ at $z_i,i=1,2$, and let $q_B$ be the solution of (\ref{equation of q}) when the two inclusions are $B_1$ and $B_2$. Let $R_i$ denote the inversion with respect to $\partial B_i$. Then $R_1R_2$ and $R_2R_1$ have fixed points $\mathbf{p}_1\in B_1$ and $\mathbf{p}_2\in B_2$. It was proved in \cite{KLYP} that
	\begin{equation}\label{p1p2}
		\mathbf{p}_i=\left((-1)^i\displaystyle\sqrt{\frac{2r_1r_2}{r_1+r_2}}\sqrt{\varepsilon}+O(\varepsilon),0\right),
	\end{equation}
	where $r_i=\kappa_i^{-1}$ is the radius of $B_i$, and
	\begin{equation}\label{qb}
		q_B(\mathbf{x})=\mathcal{N}_{\mathbf{p}_1}(\mathbf{x})-\mathcal{N}_{\mathbf{p}_2}(\mathbf{x}).
	\end{equation}
	As in \cite{emitter1}, assume $v$ is the solution of the problem
	\begin{equation}\label{equation of v}
		\begin{cases}
			\Delta v=0\ \ {\rm in}\ \Omega,\\
			v=-\mathbf{a}\cdot \nabla \mathcal{N}_{\mathbf{p}}\ {\rm on}\ \partial D_1\cup\partial D_2,\\
			\displaystyle\int_{\Omega}|\nabla v|^2{\rm d}\mathbf{x}<\infty.
		\end{cases}
	\end{equation}
	Then there exists a constant $v_0$ such that
	\begin{equation}\label{prop of v}
		v\rightarrow v_0\ \ {\rm and}\ \ |\nabla v(\mathbf{x})|=O(|\mathbf{x}|^{-2})\ {\rm as}\ |\mathbf{x}|\rightarrow\infty.
	\end{equation}
	The proof of existence and uniqueness of $v$ and (\ref{prop of v}) is standard, but \cite{emitter1} does not go into detail. We prove these results in Appendix A.
	
	Applying Green's theorem in $\Omega\cap B_R$ and then letting $R\rightarrow\infty$, we obtain
	$$\int_{\partial D_1}\partial_{\nu} v\ {\rm d}s+\int_{\partial D_2}\partial_{\nu} v\ {\rm d}s=0.$$
	Define
	$$c\coloneqq\int_{\partial D_1}\partial_{\nu} v\ {\rm d}s.$$
	Then one can check that the function $u$, defined by
	\begin{equation}\label{u}
		u(\mathbf{x})=\mathbf{a}\cdot\nabla\mathcal{N}_{\mathbf{p}}(\mathbf{x})+v(\mathbf{x})+cq(\mathbf{x})-v_0,
	\end{equation}
	is the unique solution of (\ref{equation of u}). Set
	$$Q(\mathbf{x})\coloneqq cq(\mathbf{x})-v_0\ \ {\rm and}\ \ r(\mathbf{x})\coloneqq \mathbf{a}\cdot\nabla\mathcal{N}_{\mathbf{p}}(\mathbf{x})+v(\mathbf{x}).$$
	The term $Q$ describes the field enhancement due to the interaction between two inclusions, while $r$ describes the singularity due to the existence of the emitter.

	\section{Estimates for the Components of the Decomposition}
	
	\subsection{Estimates for $q$}
	\begin{lem}
		Let $q_B$ be defined as in (\ref{qb}). Then
		\begin{equation}\label{estimate of qb}
			|\nabla q_B(\mathbf{x})|\sim\displaystyle\frac{\sqrt{\varepsilon}}{\varepsilon+x_2^2},\ \mathbf{x}\in\Omega_0.
		\end{equation}
	\end{lem}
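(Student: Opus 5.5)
The plan is to compute $\nabla q_B$ explicitly from (\ref{qb}) and compare the two logarithmic singularities. Write $\mathbf{p}_1=(-\alpha_1,0)$ and $\mathbf{p}_2=(\alpha_2,0)$ with $\alpha_i=\alpha\sqrt{\varepsilon}+O(\varepsilon)$, where $\alpha=\sqrt{2r_1r_2/(r_1+r_2)}$, by (\ref{p1p2}). Then
$$2\pi\nabla q_B(\mathbf{x})=\frac{\mathbf{x}-\mathbf{p}_1}{|\mathbf{x}-\mathbf{p}_1|^2}-\frac{\mathbf{x}-\mathbf{p}_2}{|\mathbf{x}-\mathbf{p}_2|^2}.$$
The cleanest route is through the complex form. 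Set $z=x_1+ix_2$ and $\partial_{x_1}q_B-i\partial_{x_2}q_B=\frac{1}{2\pi}\bigl(\frac{1}{z+\alpha_1}-\frac{1}{z-\alpha_2}\bigr)$. This gives the exact identity
$$|\nabla q_B(\mathbf{x})|=\frac{1}{2\pi}\,\frac{\alpha_1+\alpha_2}{|\mathbf{x}-\mathbf{p}_1|\,|\mathbf{x}-\mathbf{p}_2|}.$$
Since $\alpha_1+\alpha_2\sim\sqrt{\varepsilon}$ for small $\varepsilon$, the lemma reduces to showing
$$|\mathbf{x}-\mathbf{p}_1|\,|\mathbf{x}-\mathbf{p}_2|\sim\varepsilon+x_2^2\quad\text{for }\mathbf{x}\in\Omega_0.$$

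Next, estimate each factor. For $\mathbf{x}\in\Omega_0$ we have $|x_1|\le \varepsilon/2+C x_2^2$, because $|f|,|g|\lesssim x_2^2$ on $|x_2|<R_0$. This yields
$$|\mathbf{x}-\mathbf{p}_i|^2=(x_1\pm\alpha_i)^2+x_2^2\le 2x_1^2+2\alpha_i^2+x_2^2\lesssim \varepsilon+x_2^2,$$
using $x_1^2\lesssim\varepsilon^2+x_2^4\lesssim \varepsilon+x_2^2$ since $R_0$ is bounded. For the lower bound, $|\mathbf{x}-\mathbf{p}_i|^2\ge x_2^2$ always. When $|x_2|\le c\sqrt{\varepsilon}$ with $c$ small, we get $|x_1|\le\varepsilon/2+Cc^2\varepsilon\le \alpha_i/2$ once $c$ and $\varepsilon_0$ are small. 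Then $|x_1\pm\alpha_i|\ge\alpha_i/2\gtrsim\sqrt\varepsilon$. Combining the two regimes gives $|\mathbf{x}-\mathbf{p}_i|^2\gtrsim\varepsilon+x_2^2$. Hence each $|\mathbf{x}-\mathbf{p}_i|\sim(\varepsilon+x_2^2)^{1/2}$, and the product is $\sim\varepsilon+x_2^2$.

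The only step needing care is the lower bound near the origin. There one must check that $\mathbf{p}_i$ stays a distance $\gtrsim\sqrt{\varepsilon}$ from the narrow region, i.e.\ that the points $\mathbf{p}_i$ lie inside $B_i$ at depth $\sim\sqrt{\varepsilon}$, while $\Omega_0\cap\{|x_2|\lesssim\sqrt\varepsilon\}$ has width $O(\varepsilon)$. This follows directly from (\ref{p1p2}) together with $\alpha>0$, so no real obstacle is expected. One point to confirm is that the coordinates in (\ref{p1p2}) are relative to the same frame as $\Omega_0$: the osculating disks are tangent at $z_1,z_2$, which by strict convexity and the normalization are $(\mp\varepsilon/2,0)$. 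If this is only approximately so, an $O(\varepsilon)$ shift is absorbed into the error term.
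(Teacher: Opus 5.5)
Your proposal is correct and follows essentially the same route as the paper. The paper also reduces to the exact identity $|\nabla q_B|=|\mathbf{p}_1-\mathbf{p}_2|/(2\pi|\mathbf{x}-\mathbf{p}_1||\mathbf{x}-\mathbf{p}_2|)$ with $|\mathbf{p}_1-\mathbf{p}_2|\sim\sqrt{\varepsilon}$, and then proves $|\mathbf{x}-\mathbf{p}_i|^2\sim\varepsilon+x_2^2$ with the same two-regime lower bound ($x_2^2\gtrsim\varepsilon$ versus $|x_1|\lesssim\varepsilon\ll|\mathbf{p}_i|$); your complex-form derivation and your check that $\mathbf{p}_i$ lie on the $x_1$-axis in the frame of $\Omega_0$ only add detail.
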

	When $D_1$ and $D_2$ are disks with the same radius, (\ref{estimate of qb}) is listed in \cite{emitter1} without details. Here we consider different radii and give a short proof. 
	\begin{proof}
		Since $q_B=\mathcal{N}_{\mathbf{p}_1}-\mathcal{N}_{\mathbf{p}_2}$, we have
		$$|\nabla q_B(\mathbf{x})|=\frac{|\mathbf{p_1-p_2}|}{2\pi|\mathbf{x-p_1}||\mathbf{x-p_2}|}.$$
		By (\ref{p1p2}), $|\mathbf{p_1-p_2}|\sim\sqrt{\varepsilon}$. It suffices to show $|\mathbf{x-p_i}|^2\sim\varepsilon+x_2^2$. First, we have
		$$|\mathbf{x-p_i}|^2\lesssim x_1^2+\varepsilon+x_2^2\lesssim\varepsilon+x_2^2.$$
		For the reverse inequality, if $\varepsilon\lesssim x_2^2$, then
		$$\varepsilon+x_2^2\lesssim x_2^2\lesssim|\mathbf{x-p_i}|^2.$$
		If $x_2^2\lesssim\varepsilon$, then $|x_1|\lesssim\varepsilon\ll|\mathbf{p}_i|$, so
		$$\varepsilon+x_2^2\lesssim|\mathbf{x-p_i}|^2.$$
		Hence, $|\mathbf{x-p_i}|^2\sim\varepsilon+x_2^2$.
	\end{proof}
	The following lemma, proved in \cite[Proposition 3]{Ammariqqb}, relates $q$ to $q_B$.
	\begin{lem}
		We have
		\begin{equation}\label{q and qb}
			q(\mathbf{x})=a_{\varepsilon}q_B(\mathbf{x})+v_1(\mathbf{x}),\ \ \mathbf{x}\in\Omega,
		\end{equation}
		where
		\begin{equation}\label{a}
			a_{\varepsilon}\coloneqq\frac{q_2-q_1}{q_B|_{\partial B_2}-q_B|_{\partial B_1}}=1+O(\varepsilon^{\gamma/2}),
		\end{equation}
		and
		\begin{equation}\label{boundness of nabla v1}
			\Vert\nabla v_1\Vert_{L^{\infty}(\Omega)}\le C.
		\end{equation}
	\end{lem}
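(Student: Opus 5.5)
We write $v_1\coloneqq q-a_\varepsilon q_B$, so the decomposition (\ref{q and qb}) holds by definition. The work is to prove the asymptotics (\ref{a}) for $a_\varepsilon$ and the uniform gradient bound (\ref{boundness of nabla v1}) for $v_1$. We follow the strategy of \cite{Ammariqqb}, comparing $q$ with the explicit function $q_B$ built on the osculating disks.

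\textbf{Step 1: boundary data of $v_1$.} By construction $v_1$ is harmonic in $\Omega$, decays at infinity, and has zero net flux through each $\partial D_i$, since $q$ has fluxes $(-1)^i$. The function $q_B$ is not constant on $\partial D_i$. However, $\partial D_i$ and $\partial B_i$ agree up to $O(|x_2|^{2+\gamma})$ near $z_i$, so on $\partial D_i\cap\{|x_2|<R_0\}$ we expect
$$|q_B-q_B|_{\partial B_i}|\lesssim |\nabla q_B|\,|x_2|^{2+\gamma}\lesssim \frac{\sqrt\varepsilon\,|x_2|^{2+\gamma}}{\varepsilon+x_2^2},$$
using Lemma 3.1 (or its extension to a neighborhood, since the same distance estimate to $\mathbf p_i$ holds there). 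Tangential derivatives obey the analogous bound. Away from the neck, $q_B$ is smooth with bounded derivatives.

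\textbf{Step 2: choose $a_\varepsilon$ and bound $v_1$.} Set $a_\varepsilon$ as in (\ref{a}), so that $v_1$ has the correct potential gap relative to the disk problem. We then split $v_1=w+(\text{const})\cdot h$, where:
\begin{itemize}
\item $w$ solves the Dirichlet-type problem with the data $-a_\varepsilon(q_B-q_B|_{\partial B_i})$ from Step 1, which is small and Hölder-controlled;
\item $h$ is the standard function equal to $1$ on $\partial D_1$ and $0$ on $\partial D_2$.
\end{itemize}
Gradient estimates for $w$ in the neck follow from the now-standard iteration in thin strips of width $\varepsilon+x_2^2$ (Li--Nirenberg/Bao--Li--Yin type). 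Because the data has oscillation $\lesssim\sqrt\varepsilon|x_2|^{\gamma}$ at height $x_2$, we get $|\nabla w|\lesssim \sqrt\varepsilon|x_2|^{\gamma}/(\varepsilon+x_2^2)\lesssim 1$ when $\gamma$ is handled appropriately, with a correction linear in $x_1$ absorbing the main part.

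\textbf{Step 3: the key obstacle.} The crucial point is to show that the flux condition forces the potential-difference mismatch $(q_2-q_1)-a_\varepsilon(q_B|_{\partial B_2}-q_B|_{\partial B_1})$ to be tiny. Once that mismatch is tiny, $v_1$ does not carry a singular $h$-component.

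We would compute the flux of $q$ and of $q_B$ through the neck. Both equal $\pm1$. The flux of $h$ is $\sim\varepsilon^{-1/2}$, and the flux of $w$ is $O(\varepsilon^{\gamma/2})$ relative to the main term. Matching fluxes then yields $a_\varepsilon=1+O(\varepsilon^{\gamma/2})$, with the $\varepsilon^{\gamma/2}$ coming from integrating $|x_2|^\gamma$ over the scale $|x_2|\sim\sqrt\varepsilon$. The coefficient of $h$ comes out of size $O(\varepsilon^{1/2}\cdot\varepsilon^{\gamma/2}\cdot\varepsilon^{-1/2})$, so its gradient contribution $\lesssim\varepsilon^{\gamma/2}\varepsilon^{-1/2}$ appears too large.

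Bounding $\nabla v_1$ uniformly therefore requires the refined cancellation of \cite{Ammariqqb}, namely that the $h$-coefficient is actually $O(\sqrt\varepsilon)$. This cancellation is the main difficulty, and we would simply invoke \cite[Proposition 3]{Ammariqqb} for it.
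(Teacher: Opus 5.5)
The paper gives no argument of its own here: the lemma is quoted from \cite[Proposition 3]{Ammariqqb}. Your proposal ends at the same citation, but the reasoning around it is wrong at the decisive points, and it invokes the reference for a claim that is not the one needed.

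First, the obstacle you isolate in Step 3 is vacuous. By the definition (\ref{a}) of $a_\varepsilon$, $q_1-a_\varepsilon q_B|_{\partial B_1}=q_2-a_\varepsilon q_B|_{\partial B_2}=:m$. So the mismatch $(q_2-q_1)-a_\varepsilon(q_B|_{\partial B_2}-q_B|_{\partial B_1})$ is identically zero, and $v_1=m+w$ exactly, where $w$ is the bounded harmonic function in $\Omega$ with data $\psi=-a_\varepsilon(q_B-q_B|_{\partial B_i})$ on $\partial D_i$. There is no $h$-component at all. Even if there were, an $O(\sqrt\varepsilon)$ coefficient would not help: $|\nabla h|\sim(\varepsilon+x_2^2)^{-1}$ in the neck, so it would still contribute $\sim\varepsilon^{-1/2}$ at $x_2=0$.

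Second, Step 1's claim that $v_1$ is flux-free is false. Since $\mathbf{p}_i\in D_i$, $q_B$ also has flux $(-1)^i$ through $\partial D_i$, so $v_1$ has flux $(-1)^i(1-a_\varepsilon)$. Used correctly, this identity together with your estimate that the flux of $w$ through the neck is $O(\varepsilon^{\gamma/2})$ is exactly what gives $a_\varepsilon=1+O(\varepsilon^{\gamma/2})$. That half of the lemma is within reach of your sketch.

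The whole content of (\ref{boundness of nabla v1}) is therefore a uniform bound on $\nabla w$, and this is where Step 2 fails. The inequality $\sqrt\varepsilon|x_2|^\gamma/(\varepsilon+x_2^2)\lesssim 1$ is false for $\gamma<1$: at $|x_2|\sim\sqrt\varepsilon$ the left side is $\sim\varepsilon^{(\gamma-1)/2}$. A ``correction linear in $x_1$'' cannot absorb it. In the thin strip, the linear interpolation between $\psi_1(x_2)$ and $\psi_2(x_2)$ \emph{is} the leading part of $w$, with slope $(\psi_2-\psi_1)/(\varepsilon+x_2^2)$, and the interpolation error is of lower order.

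Take an admissible $C^{2,\gamma}$ configuration: $D_1=B_1$ near the neck and $g(x_2)=r_2-\sqrt{r_2^2-x_2^2}+\beta|x_2|^{2+\gamma}$ with $\beta>0$. There this slope is of order $\varepsilon^{(\gamma-1)/2}$ at $|x_2|\sim\sqrt\varepsilon$. So your Steps 1--3 cannot produce (\ref{boundness of nabla v1}); the uniform bound rests entirely on the citation.

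Your own heuristic, carried out correctly, also suggests checking under which regularity that bound is established in \cite{Ammariqqb}. It is consistent with a uniform bound when $\partial D_i$ deviates from $\partial B_i$ by $O(|x_2|^3)$. Otherwise it indicates only $|\nabla v_1|\lesssim 1+\sqrt\varepsilon\,|x_2|^{2+\gamma}(\varepsilon+x_2^2)^{-2}$. As a proof at the level of the paper, the sound version of your proposal is simply the citation; the extra argument, as written, does not support the statement.
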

	By Lemmas 3.1 and 3.2, we have
	$$|\nabla q(\mathbf{x})|\lesssim\frac{\sqrt{\varepsilon}}{\varepsilon+x_2^2}+1.$$
	This is not enough for us because we want to remove the constant term. So we need more precise estimates.
	\begin{lem}
		Let $\Omega_E$ be an exterior domain and $w\in C(\overline{\Omega_E})$ be a harmonic function in $\Omega_E$ such that
		$$w(\mathbf{x})\rightarrow w_0\ {\rm as}\ |\mathbf{x}|\rightarrow\infty.$$
		Then
		$$\inf_{\partial \Omega_E}w\le w(\mathbf{x})\le\sup_{\partial \Omega_E}w.$$
	\end{lem}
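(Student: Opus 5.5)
The idea is to reduce to the classical maximum principle on bounded domains by truncating $\Omega_E$ with a large circle and using the limit $w_0$ at infinity. Write $m\coloneqq\inf_{\partial\Omega_E}w$ and $M'\coloneqq\sup_{\partial\Omega_E}w$. We may assume $\partial\Omega_E$ is compact and nonempty, so that $m$ and $M'$ are finite. The first step is to show that $m\le w_0\le M'$; everything else then follows directly.

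Granting this, fix $\mathbf{x}\in\Omega_E$ and $\eta>0$. Since $w(\mathbf{y})\to w_0$, we can choose $R$ so large that $\mathbf{x}\in B_R$, $\partial\Omega_E\subset B_R$, and $|w-w_0|<\eta$ on $\partial B_R$. Apply the weak maximum principle on the bounded domain $\Omega_E\cap B_R$, whose boundary lies in $\partial\Omega_E\cup\partial B_R$. This gives
$$\min(m,w_0-\eta)\le w(\mathbf{x})\le\max(M',w_0+\eta).$$
Letting $\eta\to0$ and using $m\le w_0\le M'$ yields $m\le w(\mathbf{x})\le M'$.

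The main obstacle is the first step, the comparison $m\le w_0\le M'$; a naive truncation argument cannot rule out $w_0>M'$. I would handle it in two dimensions with a logarithmic barrier. Suppose $w_0>M'$. After a translation, assume $\overline{B_{\rho}}\subset\mathbb{R}^2\setminus\overline{\Omega_E}$ for some $\rho>0$; if the complement has empty interior, use instead a point of the complement and a small $\rho$ together with the continuity of $w$ up to the boundary. Now consider
$$h_\delta(\mathbf{y})\coloneqq w(\mathbf{y})-\delta\log(|\mathbf{y}|/\rho).$$
This function is harmonic in $\Omega_E$, satisfies $h_\delta\le M'$ on $\partial\Omega_E$ where $\log(|\mathbf{y}|/\rho)\ge0$, and tends to $-\infty$ at infinity. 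The maximum principle on $\Omega_E\cap B_R$, with $R$ large, then gives $w\le M'+\delta\log(|\mathbf{y}|/\rho)$. Letting $\delta\to0$ gives $w\le M'$ everywhere in $\Omega_E$, so $w_0\le M'$, a contradiction. The symmetric argument applied to $-w$ gives $w_0\ge m$.

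This barrier argument in fact proves the statement directly, without the separate truncation step. Its use of $\log|\mathbf{y}|$ is also where the two-dimensionality enters, consistent with the paper's remark that such lemmas do not carry over to three dimensions.
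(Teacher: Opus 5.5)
Your proof is correct, but it handles the key step differently from the paper. The paper's first step is exactly your truncation, which gives $\sup_{\Omega_E}w\le\max\{\sup_{\partial\Omega_E}w,w_0\}$. It then rules out $w_0>\sup_{\partial\Omega_E}w$ by contradiction. It composes $w$ with the inversion $\Phi(\mathbf{x})=\mathbf{x}/|\mathbf{x}|^2$ (in two dimensions harmonicity is preserved without a weight), sets $\tilde w(\mathbf{0})=w_0$, and applies the strong maximum principle at the interior maximum point $\mathbf{0}$. That route tacitly uses three things: the removable singularity theorem (so that $\tilde w$ is harmonic at the origin), connectedness of $\Omega_E$ (to get a contradiction from $\tilde w$ being constant), and the existence of the limit $w_0$.

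Your barrier $w-\delta\log(|\mathbf{y}|/\rho)$ needs only the weak maximum principle on bounded sets and proves $w\le\sup_{\partial\Omega_E}w$ directly. It also works whenever $w$ is merely bounded near infinity (even $w=o(\log|\mathbf{y}|)$ suffices). So, as you note, the truncation step and the limit $w_0$ are not really needed. The two-dimensional input differs: for you it is the harmonic function $\log|\mathbf{y}|$ tending to $+\infty$, for the paper it is the weight-free Kelvin transform. Both disappear in three dimensions, where $1-1/|\mathbf{x}|$ outside the unit ball violates the lemma.

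Your treatment of the degenerate case where $\mathbb{R}^2\setminus\Omega_E$ has empty interior is terse but sound. Center the barrier at a boundary point, work on $\Omega_E\setminus\overline{B_\rho}$, control $w$ on $\partial B_\rho$ by continuity at that point, and let $\delta\to0$ and then $\rho\to0$. This case never arises in the paper, which applies the lemma only to $\Omega$ and $\widetilde{S_1}$.
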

	\begin{proof}
		By the maximum principle,
		$$\sup_{\Omega_E}w\le\max\{\sup_{\partial \Omega_E}w,w_0\}.$$
		If $w_0>\sup_{\partial \Omega_E}w$, then $\sup_{\Omega_E}w\le w_0$. Choose $R>0$ such that $\mathbb{R}^2\setminus\Omega_E\subset B_R$. Consider the Kelvin transformation
		$$\Phi(\mathbf{x})=\displaystyle\frac{\mathbf{x}}{|\mathbf{x}|^2}.$$
		Define
		$$\tilde{w}(\mathbf{x})\coloneqq w(\Phi(\mathbf{x})),\ \mathbf{x}\in B_{1/R}\setminus\{\mathbf{0}\}.$$
		Hence,
		$$\tilde{w}(\mathbf{x})\rightarrow w_0\ {\rm as}\ \mathbf{x}\rightarrow \mathbf{0}.$$
		We may set $\tilde{w}(\mathbf{0})=w_0$. Then $\tilde{w}$ attains its maximum at an interior point. The strong maximum principle implies $w$ is a constant, which is impossible. Consequently, $w_0\le\sup_{\partial \Omega_E}w$ and then $$w(\mathbf{x})\le\sup_{\partial \Omega_E}w.$$
		Considering $-w$ in the same way implies $w(\mathbf{x})\ge\inf_{\partial \Omega_E}w$.  
	\end{proof}
	\begin{lem}
		Let $q$ be the solution of (\ref{equation of q}). Then
		$$\Vert q\Vert_{L^{\infty}(\Omega)}\lesssim\sqrt{\varepsilon}.$$
	\end{lem}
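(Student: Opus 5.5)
By Lemma 3.3 applied to $w=q$ on $\Omega$, with $w_0=0$ because $q=O(|\mathbf{x}|^{-1})$, we have
$$\min\{q_1,q_2\}\le q\le\max\{q_1,q_2\}\quad\text{in }\Omega.$$
So it suffices to show $|q_1|+|q_2|\lesssim\sqrt{\varepsilon}$. I would split this into a bound on the difference $q_2-q_1$ and a bound on some weighted average of $q_1$ and $q_2$.

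\textbf{The difference.} By (\ref{a}) in Lemma 3.2,
$$q_2-q_1=a_\varepsilon\bigl(q_B|_{\partial B_2}-q_B|_{\partial B_1}\bigr),\qquad a_\varepsilon=1+O(\varepsilon^{\gamma/2}).$$
The function $q_B=\mathcal{N}_{\mathbf{p}_1}-\mathcal{N}_{\mathbf{p}_2}$ is constant on each $\partial B_i$, since $\mathbf{p}_1,\mathbf{p}_2$ are mutually symmetric with respect to both circles. I would evaluate these constants at the points $z_1,z_2$ closest to the origin. For $z_2\approx(\varepsilon/2,0)$ and $|\mathbf{p}_i|\sim\sqrt{\varepsilon}$ from (\ref{p1p2}),
$$q_B(z_2)=\frac{1}{2\pi}\log\frac{|z_2-\mathbf{p}_1|}{|z_2-\mathbf{p}_2|}=\frac{1}{2\pi}\log\frac{|\mathbf{p}_1|+\varepsilon/2}{|\mathbf{p}_2|-\varepsilon/2}.$$
The symmetric-looking leading terms cancel, and the quotient is $1+O(\sqrt{\varepsilon})$, which gives $|q_B|_{\partial B_i}|\lesssim\sqrt{\varepsilon}$. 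Alternatively, integrating $|\nabla q_B|$ across the gap using Lemma 3.1 gives $\sqrt{\varepsilon}\cdot\varepsilon/\varepsilon=\sqrt{\varepsilon}$ along $L$. Either way, $|q_2-q_1|\lesssim\sqrt{\varepsilon}$.

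\textbf{A weighted average.} I would use the behavior at infinity. Since the total flux is $\int_{\partial D_1}\partial_\nu q+\int_{\partial D_2}\partial_\nu q=0$ and $q\to0$, $q$ has a dipole expansion at infinity. I would express $q$ through the single-layer representation
$$q=\mathcal{S}_{\partial D_1\cup\partial D_2}[\varphi],$$
with $\varphi=\partial_\nu q$ (up to sign convention) and $\int_{\partial D_i}\varphi=(-1)^i$. Testing against the equilibrium density $\psi$ of $D_1\cup D_2$, which is normalized to total mass $1$ and makes $\mathcal{S}[\psi]$ constant on $\partial D_1\cup\partial D_2$, and using symmetry of $\mathcal{S}$, gives
$$q_1\int_{\partial D_1}\psi+q_2\int_{\partial D_2}\psi=\int\varphi\,\mathcal{S}[\psi]=\mathrm{const}\cdot(-1+1)=0.$$
The masses $m_i=\int_{\partial D_i}\psi$ are nonnegative, sum to $1$, and are each bounded below uniformly in $\varepsilon$ because the limiting touching configuration has positive capacity on each piece. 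Hence $m_1q_1+m_2q_2=0$ with $m_i\gtrsim1$. Together with $|q_2-q_1|\lesssim\sqrt{\varepsilon}$, this yields $|q_i|\lesssim\sqrt{\varepsilon}$.

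\textbf{Main obstacle.} I expect the main difficulty to be the uniform lower bound on $m_i$ and the justification of the pairing identity in the exterior two-dimensional setting. There is a logarithmic capacity subtlety here: $\mathcal{S}[\psi]$ grows like $\frac{1}{2\pi}\log|\mathbf{x}|$ at infinity, so one must check carefully that this growth does not contribute to the pairing.

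If that route proves messy, I would try a cruder alternative. Take $q_1\le q_2$ without loss of generality. If $q_1>0$, then $q>0$ everywhere by Lemma 3.3. But a positive harmonic function tending to $0$ at infinity must have negative total flux at infinity, contradicting zero total flux. Hence $q_1\le0\le q_2$, and so $|q_i|\le q_2-q_1\lesssim\sqrt{\varepsilon}$. This sign argument is simpler, and I would try it first.
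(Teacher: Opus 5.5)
Your sign argument is essentially the paper's proof. Lemma 3.3 together with $q\to0$ forces $\min\{q_1,q_2\}\le 0\le\max\{q_1,q_2\}$, hence $|q_i|\le|q_2-q_1|\lesssim\sqrt{\varepsilon}$. The paper quotes $|q_2-q_1|\sim\sqrt{\varepsilon}$ from \cite[Lemma 4]{Ammariqqb}. Your derivation of the upper bound from (\ref{a}) and the explicit values of $q_B$ on $\partial B_1,\partial B_2$ is a valid substitute, since only the upper bound is needed here.

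One correction: the ``negative total flux'' justification is a three-dimensional heuristic and is not the mechanism in the plane. In two dimensions every harmonic function that is $O(|\mathbf{x}|^{-1})$ at infinity has zero flux through large circles, whatever its sign, because its leading term is a sign-changing dipole. The step needs no flux at all. Lemma 3.3 already gives $q\ge q_1>0$ throughout $\Omega$, which is incompatible with $q\to0$. Equivalently, let $|\mathbf{x}|\to\infty$ in the two-sided bound, as the paper does.

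Incidentally, your equilibrium-density route would also work. It needs only $m_1,m_2>0$, not the uniform lower bound you identified as the main obstacle, because $m_1q_1+m_2q_2=0$ already forces $q_1$ and $q_2$ to have opposite signs.
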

	\begin{proof}
		Recall $q\rightarrow 0$ as $|\mathbf{x}|\rightarrow\infty$. Lemma 3.3 implies
		$$\sup_{\Omega}q=\max\{q_1,q_2\}\ge0,\ \ \ \ \inf_{\Omega}q=\min\{q_1,q_2\}\le0.$$
		By \cite[Lemma 4]{Ammariqqb}, we have $|q_2-q_1|\sim\sqrt{\varepsilon}$. Hence, $$|q_i|\lesssim\sqrt{\varepsilon},$$
		which means $\Vert q\Vert_{L^{\infty}(\Omega)}\lesssim\sqrt{\varepsilon}$.
	\end{proof}
	\begin{prop}
		For $\mathbf{x}=(x_1,x_2)\in\Omega_0$, there holds
		\begin{equation}\label{estimate of nabla q}
			|\nabla q(\mathbf{x})|\lesssim\frac{\sqrt{\varepsilon}}{\varepsilon+x_2^2}.
		\end{equation}
		In particular, there exists a constant $c_0$ such that if $|x_2|\le c_0\varepsilon^{1/4}$, then
		\begin{equation}\label{estimate of nabla qq}
			|\nabla q(\mathbf{x})|\sim\frac{\sqrt{\varepsilon}}{\varepsilon+x_2^2}.
		\end{equation}
	\end{prop}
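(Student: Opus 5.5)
The plan is a rescaled gradient estimate in thin boxes, using only Lemma 3.4 and the decomposition of Lemma 3.2.

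For $\mathbf{x}=(x_1,x_2)\in\Omega_0$, set $\delta(x_2)\coloneqq\varepsilon+f$- and $g$-gap width, namely $\delta(x_2)=\varepsilon+g(x_2)-f(x_2)\sim\varepsilon+x_2^2$. Consider the thin box
$$\Omega_{\delta}(x_2)\coloneqq\{(y_1,y_2)\in\Omega_0:|y_2-x_2|<\delta(x_2)\},$$
where we may shrink $R_0$ if needed and treat $|x_2|$ near $R_0$ by a standard boundary estimate away from the narrow region. On this box $\delta(y_2)\sim\delta(x_2)$, so the box is comparable to a square of side $\delta(x_2)$. Its left and right sides lie on $\partial D_1$ and $\partial D_2$, where $q$ equals the constants $q_1$ and $q_2$.

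Rescale via $\mathbf{y}\mapsto(\mathbf{y}-(0,x_2))/\delta(x_2)$. This maps the box to a domain of unit size whose top and bottom boundaries are graphs with uniformly bounded $C^{2,\gamma}$ norms. The curvatures become $O(\delta)$ times the original ones, and the first derivatives are $O(|x_2|)$, which is small.

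Apply the standard boundary $C^{1}$ (Schauder) estimate for harmonic functions with constant Dirichlet data to the rescaled function $\tilde q(\mathbf{z})=q(\mathbf{y})-q_1$. Since $q$ is constant on each lateral piece of $\partial D_i$, this gives
$$|\nabla q(\mathbf{x})|\lesssim\frac{1}{\delta(x_2)}\|q-q_1\|_{L^\infty(\Omega_{2\delta}(x_2))}.$$
One subtlety is that the estimate must be applied on a slightly larger box $\Omega_{2\delta}$ to obtain an interior estimate in the $y_2$ direction.

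By Lemma 3.4, $\|q\|_{L^\infty}\lesssim\sqrt{\varepsilon}$, so $\|q-q_1\|\lesssim\sqrt{\varepsilon}$. Since $\delta\sim\varepsilon+x_2^2$, this yields (\ref{estimate of nabla q}).

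For the lower bound (\ref{estimate of nabla qq}), use Lemma 3.2: $\nabla q=a_\varepsilon\nabla q_B+\nabla v_1$ with $a_\varepsilon=1+O(\varepsilon^{\gamma/2})\ge1/2$ and $|\nabla v_1|\le C$. By Lemma 3.1, $|\nabla q_B|\ge C_*^{-1}\sqrt{\varepsilon}/(\varepsilon+x_2^2)$. If $|x_2|\le c_0\varepsilon^{1/4}$, then $\varepsilon+x_2^2\le\varepsilon+c_0^2\sqrt{\varepsilon}\le2c_0^2\sqrt{\varepsilon}$ for small $\varepsilon$ (when $c_0^2\ge\sqrt{\varepsilon_0}$). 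Hence
$$\frac{\sqrt{\varepsilon}}{\varepsilon+x_2^2}\ge\frac{1}{2c_0^2}.$$
Choosing $c_0$ small enough that $\frac{1}{2C_*}\cdot\frac{1}{2c_0^2}\ge2C$ makes $|\nabla q|\ge|a_\varepsilon\nabla q_B|-C\gtrsim\sqrt{\varepsilon}/(\varepsilon+x_2^2)$. Together with the upper bound, this gives $\sim$.

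The main obstacle is making the rescaled boundary estimate uniform. One must check that after scaling by $\delta(x_2)$ the boundary graphs are uniformly $C^{1,\gamma}$, which holds because the derivatives $f'(y_2),g'(y_2)$ are small and the rescaled $C^{1,\gamma}$ seminorm scales like $\delta^{\gamma}$. One must also check that the box is genuinely of aspect ratio $\sim1$; this follows because $|g-f|$ varies by at most $C|x_2|\delta+C\delta^2\ll\delta$ across the box. This is the standard argument of \cite{perfect-dim2-general}-type iteration, but here no iteration is needed since the boundary data are exactly constant.
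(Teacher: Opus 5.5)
Your proposal is correct and follows essentially the paper's route: the upper bound is a rescaled interior/boundary gradient estimate at scale $\varepsilon+x_2^2$, fed by $\|q\|_{L^\infty(\Omega)}\lesssim\sqrt{\varepsilon}$ from Lemma 3.4. The lower bound, as in the paper, uses Lemmas 3.1 and 3.2 with $c_0$ chosen small so that $|a_\varepsilon\nabla q_B|\ge 2|\nabla v_1|$.

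The only difference is cosmetic. For $x_1\le 0$, the paper works in a neighborhood of radius comparable to ${\rm dist}(\mathbf{x},\partial D_2)\sim\varepsilon+x_2^2$, which meets only $\partial D_1$, where $q-q_1=0$. Your full-width box instead touches both $\partial D_1$ and $\partial D_2$, with piecewise constant data. That is harmless, since $|q_2-q_1|\le\|q-q_1\|_{L^\infty}$ on the box.
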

	\begin{proof}
		For $\mathbf{x}\in\Omega_0$ with $x_1\le 0$, we claim
		$${\rm dist}(\mathbf{x},\partial D_2)\sim\varepsilon+x_2^2.$$
		In fact, we can decrease $R_0$ such that ${\rm dist}(\mathbf{x},\partial D_2)={\rm dist}(\mathbf{x},\Gamma_2)$. Then for any $\mathbf{b}=(b_1,b_2)\in\Gamma_2$,
		\begin{equation}
			\begin{aligned}
				\varepsilon+x_2^2&\lesssim \varepsilon/2+g(x_2)-x_1\\
				&\lesssim|g(x_2)-g(b_2)|+|\varepsilon/2+g(b_2)-x_1|\\
				&\lesssim|x_2-b_2|+|b_1-x_1|\lesssim|\mathbf{x-b}|.
			\end{aligned}\nonumber
		\end{equation}
		Taking the infimum implies $\varepsilon+x_2^2\lesssim{\rm dist}(\mathbf{x},\partial D_2)$. Since it is easy to obtain ${\rm dist}(\mathbf{x},\partial D_2)\lesssim\varepsilon+x_2^2$, the claim holds.
		
		By Lemma 3.4,
		$$\Vert q-q_1 \Vert_{L^{\infty}(\Omega)}\lesssim\sqrt{\varepsilon}.$$
		It then follows from the interior or boundary gradient estimates for harmonic functions that
		$$|\nabla q(\mathbf{x})|=|\nabla(q-q_1)(\mathbf{x})|\lesssim\frac{\sqrt{\varepsilon}}{{\rm dist}(\mathbf{x},\partial D_2)}\sim\frac{\sqrt{\varepsilon}}{\varepsilon+x_2^2}.$$
		When $x_1>0$, we can prove ${\rm dist}(\mathbf{x},\partial D_1)\sim\varepsilon+x_2^2$, so (\ref{estimate of nabla q}) still holds.
		
		By (\ref{estimate of qb}) and (\ref{boundness of nabla v1}), we can choose $c_0$ sufficiently small such that for all $|x_2|\le c_0\varepsilon^{1/4}$,
		$$|a_{\varepsilon}\nabla q_B(\mathbf{x})|\ge2|\nabla v_1(\mathbf{x})|.$$
		Then (\ref{estimate of nabla qq}) holds.
	\end{proof}

	\subsection{Estimates for $c$}
	We see from (\ref{equation of v}) and (\ref{u}) that
	$$u_i=cq_i-v_0,\ i=1,2.$$
	Therefore,
	\begin{equation}\label{c}
		c=\frac{u_2-u_1}{q_2-q_1}.
	\end{equation}
	By \cite[Lemma 4]{Ammariqqb},
	\begin{equation}\label{estimate of q2-q1}
		|q_2-q_1|\sim\sqrt{\varepsilon},
	\end{equation}
	so it suffices to estimate $u_2-u_1$. We first give a formula for this quantity, which is a generalization of \cite[Eq. (2.15)]{emitter1}.
	\begin{lem}
		Let $u$ be the solution of (\ref{equation of u}). Then
		\begin{equation}\label{formular of u2-u1}
			u_2-u_1=\mathbf{a}\cdot\nabla q(\mathbf{p}).
		\end{equation}
	\end{lem}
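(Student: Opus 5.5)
We will obtain (\ref{formular of u2-u1}) from a Green's reciprocity identity between $u$ and $q$ on the truncated domain $\Omega_{R,\delta}\coloneqq(\Omega\cap B_R)\setminus\overline{B_\delta(\mathbf{p})}$, then let $R\to\infty$ and $\delta\to0$. Both $u$ and $q$ are harmonic in $\Omega_{R,\delta}$, so
$$\int_{\partial\Omega_{R,\delta}}\left(u\,\partial_\nu q-q\,\partial_\nu u\right){\rm d}s=0,$$
with $\nu$ the outward normal to $\Omega_{R,\delta}$.

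We treat the three kinds of boundary pieces in turn.

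\textbf{Inclusion boundaries.} On $\partial D_i$ we have $u=u_i$ and $q=q_i$, both constant. The flux conditions $\int_{\partial D_i}\partial_\nu u\,{\rm d}s=0$ and $\int_{\partial D_i}\partial_\nu q\,{\rm d}s=(-1)^i$ give the contribution
$$u_1(-1)+u_2(+1)-q_1\cdot0-q_2\cdot0=u_2-u_1.$$

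\textbf{Large circle.} On $|\mathbf{x}|=R$ we have $u,q=O(R^{-1})$. Their gradients are $O(R^{-2})$ by the standard expansion of harmonic functions in an exterior domain that decay at infinity, or by gradient estimates on annuli. The integrand is therefore $O(R^{-3})$, and the integral over a circle of length $2\pi R$ tends to $0$.

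\textbf{Small circle around the emitter.} Write $u=\mathbf{a}\cdot\nabla\mathcal{N}_{\mathbf{p}}+h$ near $\mathbf{p}$, where $h$ is harmonic near $\mathbf{p}$ (indeed $h=v+cq-v_0$ by (\ref{u})). The terms involving $h$ and $q$ are smooth, so their contribution over $\partial B_\delta(\mathbf{p})$ is $O(\delta)\to0$. What remains is
$$-\int_{\partial B_\delta(\mathbf{p})}\left(\mathbf{a}\cdot\nabla\mathcal{N}_{\mathbf{p}}\,\partial_r q-q\,\partial_r(\mathbf{a}\cdot\nabla\mathcal{N}_{\mathbf{p}})\right){\rm d}s.$$
The minus sign appears because the outward normal of $\Omega_{R,\delta}$ there is $-\partial_r$. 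The sign of this limit must be tracked carefully, since it has to match the sign convention of the equation $\Delta u=\mathbf{a}\cdot\nabla\delta_{\mathbf{p}}$.

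To evaluate it, Taylor expand $q(\mathbf{x})=q(\mathbf{p})+\nabla q(\mathbf{p})\cdot(\mathbf{x}-\mathbf{p})+O(\delta^2)$ and use
$$\mathbf{a}\cdot\nabla\mathcal{N}_{\mathbf{p}}=\frac{\mathbf{a}\cdot\mathbf{e}_r}{2\pi\delta},\qquad \partial_r(\mathbf{a}\cdot\nabla\mathcal{N}_{\mathbf{p}})=-\frac{\mathbf{a}\cdot\mathbf{e}_r}{2\pi\delta^2}.$$
The constant $q(\mathbf{p})$ drops out because $\int\mathbf{a}\cdot\mathbf{e}_r\,{\rm d}s=0$. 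Each of the two linear terms contributes $\tfrac12\,\mathbf{a}\cdot\nabla q(\mathbf{p})$, by $\int_0^{2\pi}(\mathbf{a}\cdot\mathbf{e}_r)(\mathbf{b}\cdot\mathbf{e}_r)\,{\rm d}\phi=\pi\,\mathbf{a}\cdot\mathbf{b}$. Hence the limit is $\pm\,\mathbf{a}\cdot\nabla q(\mathbf{p})$.

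As a sign check, $u$ satisfies $\int u\Delta\varphi=\int\Delta u\,\varphi=-\mathbf{a}\cdot\nabla\varphi(\mathbf{p})$ for test functions $\varphi$. Pairing this with $q$ in the distributional sense gives $u_2-u_1-\mathbf{a}\cdot\nabla q(\mathbf{p})=0$. Combining the three contributions then yields $u_2-u_1=\mathbf{a}\cdot\nabla q(\mathbf{p})$.

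The main obstacle is the bookkeeping of signs and normals, in particular on the small circle, together with justifying the decay $|\nabla u|,|\nabla q|=O(|\mathbf{x}|^{-2})$ at infinity. For $u$ this decay follows from (\ref{prop of v}) and the decay of $q$ and of $\mathbf{a}\cdot\nabla\mathcal{N}_{\mathbf{p}}$, and the same Kelvin-transform argument applies to $q$.
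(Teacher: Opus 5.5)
Your argument is correct and is essentially the paper's: it applies Green's second identity to $u$ and $q$, and the flux conditions on $\partial D_1,\partial D_2$ produce $u_2-u_1$. The difference is that you justify the paper's formal step $-\int_\Omega \mathbf{a}\cdot\nabla\delta_{\mathbf{p}}\,q\,{\rm d}\mathbf{x}=\mathbf{a}\cdot\nabla q(\mathbf{p})$ by excising $B_\delta(\mathbf{p})$ and truncating at $|\mathbf{x}|=R$.

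Your small-circle computation already fixes the sign: the inner integral tends to $\mathbf{a}\cdot\nabla q(\mathbf{p})$, and the normal there is $-\partial_r$, so the contribution is $-\mathbf{a}\cdot\nabla q(\mathbf{p})$. You should therefore drop the ``$\pm$'' and the distributional ``sign check''. That check pairs with $q$ as if it were a compactly supported test function, so it presupposes the boundary identity being proved.
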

	\begin{proof}
		It follows from (\ref{equation of q}) and (\ref{u}) that
		\begin{equation}
			\begin{aligned}
				u_2-u_1&=\int_{\partial D_1\cup\partial D_2}u\partial_{\nu}q\ {\rm d}s\\
				&=\int_{\partial D_1\cup\partial D_2}\mathbf{a}\cdot\nabla\mathcal{N}_{\mathbf{p}}\partial_{\nu}q\ {\rm d}s+\int_{\partial D_1\cup\partial D_2}(v+cq-v_0)\partial_{\nu}q\ {\rm d}s.
			\end{aligned}\nonumber
		\end{equation}
		The third line of (\ref{equation of q}) and the definition of $c$ imply
		$$\int_{\partial D_1\cup\partial D_2}q\partial_{\nu}(v+cq-v_0)\ {\rm d}s=0.$$
		Since $q$ is constant on each $\partial D_i$, and $\mathbf{a}\cdot\nabla\mathcal{N}_{\mathbf{p}}$ is harmonic in $D_i$, we have
		$$\int_{\partial D_1\cup\partial D_2}q\partial_{\nu}(\mathbf{a}\cdot\nabla\mathcal{N}_{\mathbf{p}})\ {\rm d}s=0.$$
		Then Green's theorem yields
		$$u_2-u_1=-\int_{\Omega}\mathbf{a}\cdot\nabla\delta_{\mathbf{p}}\ q\ {\rm d}\mathbf{x}=\mathbf{a}\cdot\nabla q(\mathbf{p}).$$
	\end{proof}
	\begin{prop}
		Let $\mathbf{a}=(a_1,a_2)$ be a unit vector and $\mathbf{p}=(s,t)\in\Omega_0$. Then
		\begin{equation}\label{estiamte of c1}
			|c|\lesssim\frac{1}{\varepsilon+t^2}.
		\end{equation}
		More precisely, for every fixed $a_0\in(0,1)$, there exists $c_1>0$ such that if $|a_1|\ge a_0$ and $|t|\le c_1\varepsilon^{1/4}$, we have
		\begin{equation}\label{estimate of c2}
			|c|\sim\frac{|a_1|}{\varepsilon+t^2},
		\end{equation}
		and if $\mathbf{a}=(0,1)$, we have
		\begin{equation}\label{estimate of c3}
			|c|\lesssim\varepsilon^{-1/2}.
		\end{equation}
	\end{prop}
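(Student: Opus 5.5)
Combining (\ref{c}), Lemma 3.5 and (\ref{estimate of q2-q1}) gives
$$|c|=\frac{|\mathbf{a}\cdot\nabla q(\mathbf{p})|}{|q_2-q_1|}\sim\varepsilon^{-1/2}|\mathbf{a}\cdot\nabla q(\mathbf{p})|,$$
so every estimate for $c$ reduces to an estimate of $\nabla q$ at the point $\mathbf{p}=(s,t)\in\Omega_0$. For (\ref{estiamte of c1}), apply Proposition 3.1, (\ref{estimate of nabla q}), at $\mathbf{x}=\mathbf{p}$. This gives $|\mathbf{a}\cdot\nabla q(\mathbf{p})|\le|\nabla q(\mathbf{p})|\lesssim\sqrt{\varepsilon}/(\varepsilon+t^2)$, and hence $|c|\lesssim(\varepsilon+t^2)^{-1}$.

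For (\ref{estimate of c2}) and (\ref{estimate of c3}) we need to look at the components of $\nabla q$ separately, using the decomposition of Lemma 3.2: $\nabla q=a_\varepsilon\nabla q_B+\nabla v_1$, with $a_\varepsilon=1+O(\varepsilon^{\gamma/2})$ and $|\nabla v_1|\le C$. Write $\mathbf{p}_i=(\mp\rho,0)+O(\varepsilon)$ with $\rho\sim\sqrt{\varepsilon}$, as in (\ref{p1p2}). Then
$$\partial_{x_1}q_B(\mathbf{x})=\frac{1}{2\pi}\Big(\frac{x_1-p_{1,1}}{|\mathbf{x}-\mathbf{p}_1|^2}-\frac{x_1-p_{2,1}}{|\mathbf{x}-\mathbf{p}_2|^2}\Big).$$
At $\mathbf{x}=\mathbf{p}$ we have $|s|\lesssim\varepsilon+t^2$, which is much smaller than $\rho$ when $|t|\le c_1\varepsilon^{1/4}$. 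So both numerators are $\approx\rho$ with the same sign, and both denominators are $\sim\varepsilon+t^2$ by the proof of Lemma 3.1. This gives $\partial_1 q_B(\mathbf{p})\sim\sqrt{\varepsilon}/(\varepsilon+t^2)$ with a definite (positive) sign. Moreover, $\partial_1 q_B$ carries the full size of $|\nabla q_B|$ in this regime: $|\mathbf{p}_1-\mathbf{p}_2|$ is horizontal up to $O(\varepsilon)$, and $|\nabla q_B|$ from Lemma 3.1 is attained mainly by the horizontal component.

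The key step is to show that $\partial_2 q_B(\mathbf{p})$ is of lower order. Computing directly,
$$\partial_2 q_B(\mathbf{p})=\frac{t}{2\pi}\Big(\frac{1}{|\mathbf{p}-\mathbf{p}_1|^2}-\frac{1}{|\mathbf{p}-\mathbf{p}_2|^2}\Big).$$
The difference of the two squared distances is $O(\rho|s|+\sqrt{\varepsilon}\,\varepsilon)=O(\sqrt{\varepsilon}(\varepsilon+t^2))$, which comes from the asymmetry $s\neq0$ and the $O(\varepsilon)$ errors in (\ref{p1p2}). Therefore
$$|\partial_2 q_B(\mathbf{p})|\lesssim\frac{|t|\sqrt{\varepsilon}}{\varepsilon+t^2}\lesssim1,$$
using $|t|\sqrt{\varepsilon}\le\varepsilon+t^2$. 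This is the main obstacle: the $O(\varepsilon)$ errors in $\mathbf{p}_i$ must be handled carefully, and I will bound them crudely as above.

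With this bound in hand, $\mathbf{a}\cdot\nabla q(\mathbf{p})=a_1a_\varepsilon\partial_1q_B(\mathbf{p})+O(1)$. Choose $c_1$ small so that $a_0\sqrt{\varepsilon}/(\varepsilon+t^2)\ge a_0c_1^{-2}$ dominates the $O(1)$ term; exactly as in Proposition 3.1 this works when $|t|\le c_1\varepsilon^{1/4}$. Then $|\mathbf{a}\cdot\nabla q(\mathbf{p})|\sim|a_1|\sqrt{\varepsilon}/(\varepsilon+t^2)$, which gives (\ref{estimate of c2}).

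For $\mathbf{a}=(0,1)$ no smallness of $t$ is needed. The bound $|\partial_2 q(\mathbf{p})|\le|\partial_2q_B(\mathbf{p})|+C\lesssim1$ holds for all $\mathbf{p}\in\Omega_0$: the difference of the squared distances is $O(\sqrt{\varepsilon}|s|+\varepsilon^{3/2})$ in general, and $|s|\lesssim\varepsilon+t^2$ throughout $\Omega_0$. Hence $|c|\lesssim\varepsilon^{-1/2}$, which is (\ref{estimate of c3}).
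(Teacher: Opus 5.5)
Your proposal is correct and follows the paper's proof essentially step for step: you reduce to $\mathbf{a}\cdot\nabla q(\mathbf{p})$ via $c=(u_2-u_1)/(q_2-q_1)$, Lemma 3.5 and $|q_2-q_1|\sim\sqrt{\varepsilon}$, you get the general bound from Proposition 3.1 at $\mathbf{x}=\mathbf{p}$, and you then combine Lemma 3.2 with the direct computation $\partial_{x_1}q_B(\mathbf{p})\sim\sqrt{\varepsilon}/(\varepsilon+t^2)$ and $|\partial_{x_2}q_B(\mathbf{p})|\lesssim\sqrt{\varepsilon}|t|(|s|+\varepsilon)/(\varepsilon+t^2)^2\lesssim 1$, exactly as the paper does. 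The only slip is cosmetic and changes nothing: the lower bound should read $\sqrt{\varepsilon}/(\varepsilon+t^2)\ge(\sqrt{\varepsilon}+c_1^2)^{-1}\ge(2c_1^2)^{-1}$ for $\varepsilon\le c_1^4$, not $\ge c_1^{-2}$.
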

	\begin{proof}
		We see from (\ref{c}), (\ref{estimate of q2-q1}) and (\ref{formular of u2-u1}) that
		\begin{equation}\label{a calculation of c}
			|c|\sim\frac{|\mathbf{a}\cdot\nabla q(\mathbf{p})|}{\sqrt{\varepsilon}}.
		\end{equation}
		Combining this with (\ref{estimate of nabla q}) proves (\ref{estiamte of c1}).
		
		Recall $q_B=\mathcal{N}_{\mathbf{p}_1}-\mathcal{N}_{\mathbf{p}_2}$. So
		$$\nabla q_{B}(\mathbf{p})=\frac{1}{2\pi}\left(\frac{\mathbf{p}_2-\mathbf{p}}{|\mathbf{p}_2-\mathbf{p}|^2}-\frac{\mathbf{p}_1-\mathbf{p}}{|\mathbf{p}_1-\mathbf{p}|^2}\right).$$
		By (\ref{p1p2}) and a direct calculation, we have
		\begin{equation}\label{nabla qB 1}
			|\partial_{x_1}q_B(\mathbf{p})|\sim\frac{\sqrt{\varepsilon}}{\varepsilon+t^2}
		\end{equation}
		and
		\begin{equation}\label{nabla qB 2}
			|\partial_{x_2}q_B(\mathbf{p})|\lesssim\frac{\sqrt{\varepsilon}\cdot|t|\cdot(|s|+\varepsilon)}{(\varepsilon+t^2)^2}\lesssim 1.
		\end{equation}
		Thus, Lemma 3.2 implies
		$$\mathbf{a}\cdot\nabla q(\mathbf{p})=a_{\varepsilon}a_1\partial_{x_1}q_B(\mathbf{p})+O(1).$$
		If $|a_1|\ge a_0$, by (\ref{nabla qB 1}), we can choose $c_1$ sufficiently small such that (\ref{estimate of c2}) holds for $|t|\le c_1\varepsilon^{1/4}$. If $a_1=0$, then $|\mathbf{a}\cdot\nabla q(\mathbf{p})|\lesssim 1$, so (\ref{estimate of c3}) holds since $|q_2-q_1|\sim\sqrt{\varepsilon}$.
	\end{proof}
	\begin{rk}
		We can see from Proposition 3.2 that, if $\mathbf{a}=(0,1)$, the estimate of $c$ is improved. This is why the results in Theorem 1.2 are stronger than those in Theorem 1.1.
	\end{rk}

	\subsection{Estimates for $r$}
	Recall
	$$r=\mathbf{a}\cdot\nabla\mathcal{N}_{\mathbf{p}}+v.$$
	When $\mathbf{x}$ is near the emitter, we directly use 
	$$|\nabla r|\le|\nabla(\mathbf{a}\cdot\nabla\mathcal{N}_{\mathbf{p}})|+|\nabla v|$$
	to estimate $|\nabla r|$. So it suffices to estimate $|\nabla v|$.
	\begin{prop}
		Let $v$ be the solution of (\ref{equation of v}). Then
		\begin{equation}\label{estimate of L infty v gen}
			\Vert v\Vert_{L^\infty(\Omega)}\lesssim\displaystyle\frac{1}{{\rm dist}(\mathbf{p},\partial\Omega)}.
		\end{equation}
		Moreover, for every fixed $\eta\in(0,1/2)$ and every $\mathbf{x}$ satisfying $|\mathbf{x-p}|\le\eta{\rm dist}(\mathbf{p},\partial\Omega)$, we have
		\begin{equation}\label{estimate of nabla v gen}
			|\nabla v(\mathbf{x})|\lesssim\frac{1}{\eta\left[{\rm dist}(\mathbf{p},\partial\Omega)\right]^2}.
		\end{equation}
		In particular, for every fixed $\theta\in(0,1)$, if ${\rm dist}(\mathbf{p},\partial\Omega)\ge\theta\varepsilon$, then
		\begin{equation}\label{estimate of L infty v}
			\Vert v\Vert_{L^\infty(\Omega)}\lesssim\displaystyle\frac{1}{\varepsilon},
		\end{equation}
		and for every $C_1\in(0,\theta/2)$, if $|\mathbf{x-p}|\le C_1\varepsilon$, then
		\begin{equation}\label{estimate of nabla v}
			|\nabla v(\mathbf{x})|\lesssim\frac{1}{C_1\varepsilon^2},
		\end{equation}
		The constants involved in $\lesssim$ do not depend on $\eta$ and $C_1$.
	\end{prop}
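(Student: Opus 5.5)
The $L^\infty$ bound comes from the maximum principle on the exterior domain, Lemma 3.3, and the gradient bound from an interior estimate on a disk around $\mathbf{p}$.

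\textbf{Step 1: the $L^\infty$ bound.} The function $v$ is harmonic in the exterior domain $\Omega$ and is continuous up to $\partial\Omega$. By (\ref{prop of v}), $v$ tends to the constant $v_0$ at infinity. Lemma 3.3 therefore gives
$$\inf_{\partial\Omega}v\le v\le\sup_{\partial\Omega}v \quad\text{in } \Omega.$$
On $\partial\Omega=\partial D_1\cup\partial D_2$ we have $v=-\mathbf{a}\cdot\nabla\mathcal{N}_{\mathbf{p}}$. Since $|\mathbf{a}|=1$,
$$|\mathbf{a}\cdot\nabla\mathcal{N}_{\mathbf{p}}(\mathbf{y})|\le\frac{1}{2\pi|\mathbf{y}-\mathbf{p}|}\le\frac{1}{2\pi\,{\rm dist}(\mathbf{p},\partial\Omega)}$$
for every $\mathbf{y}\in\partial\Omega$. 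This yields (\ref{estimate of L infty v gen}) with constant $1/(2\pi)$. The only point to check is the continuity of $v$ up to the boundary needed in Lemma 3.3. The boundary data is smooth, since $\mathbf{p}\notin\partial\Omega$, and $\partial\Omega$ is $C^{2,\gamma}$, so this follows from standard boundary regularity; the Appendix A construction can be taken to supply it.

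\textbf{Step 2: the gradient bound.} Set $d={\rm dist}(\mathbf{p},\partial\Omega)$. The disk $B_d(\mathbf{p})$ lies in $\Omega$ and $v$ is harmonic there. Let $|\mathbf{x}-\mathbf{p}|\le\eta d$ with $\eta<1/2$. Then $B_{(1-\eta)d}(\mathbf{x})\subset B_d(\mathbf{p})\subset\Omega$, and $(1-\eta)d\ge d/2$. The standard interior gradient estimate for harmonic functions, $|\nabla v(\mathbf{x})|\le\frac{2}{\rho}\sup_{B_\rho(\mathbf{x})}|v|$ with $\rho=d/2$, combined with Step 1 gives
$$|\nabla v(\mathbf{x})|\lesssim\frac{1}{d^2}.$$
This is even stronger than (\ref{estimate of nabla v gen}), because $1/\eta>2$. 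So the stated bound holds with a constant independent of $\eta$.

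\textbf{Step 3: the special case.} If $d\ge\theta\varepsilon$, Step 1 gives $\|v\|_{L^\infty}\lesssim 1/\varepsilon$, with the implicit constant depending only on $\theta$. For $C_1\in(0,\theta/2)$ take $\eta=C_1\varepsilon/d\le C_1/\theta<1/2$. Then $|\mathbf{x}-\mathbf{p}|\le C_1\varepsilon=\eta d$, and Step 2 gives
$$|\nabla v(\mathbf{x})|\lesssim\frac{1}{\eta d^2}=\frac{1}{C_1\varepsilon d}\lesssim\frac{1}{C_1\varepsilon^2}.$$
Alternatively, $|\nabla v(\mathbf{x})|\lesssim 1/d^2\lesssim 1/\varepsilon^2\le 1/(C_1\varepsilon^2)$ when $C_1<1$. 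Either way the constants do not depend on $\eta$ or $C_1$.

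\textbf{Main obstacle.} The argument is essentially routine. The only genuinely delicate point is justifying the hypotheses of Lemma 3.3: the existence of the limit $v_0$ at infinity and continuity up to the boundary. Both are taken from (\ref{prop of v}) and the Appendix.
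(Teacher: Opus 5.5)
Your proposal is correct and follows the paper's proof: Lemma 3.3 reduces $\Vert v\Vert_{L^\infty(\Omega)}$ to the boundary data $-\mathbf{a}\cdot\nabla\mathcal{N}_{\mathbf{p}}$, and an interior gradient estimate on a disk inside $B_d(\mathbf{p})$, where $d={\rm dist}(\mathbf{p},\partial\Omega)$, then gives the gradient bound. The only difference is that the paper uses the disk $B_{\eta d/2}(\mathbf{x})$, which produces the factor $1/\eta$ directly, whereas your disk $B_{d/2}(\mathbf{x})$ gives the slightly sharper bound $|\nabla v(\mathbf{x})|\lesssim d^{-2}$, and this implies the stated estimate because $1/\eta>2$.
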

	\begin{proof}
		Note that $v$ is harmonic and $v\rightarrow v_0$ as $|\mathbf{x}|\rightarrow\infty$. Lemma 3.3 implies
		$$\Vert v\Vert_{L^\infty(\Omega)}=\Vert v\Vert_{L^\infty(\partial\Omega)}\lesssim\Vert |\mathbf{x-p}|^{-1}\Vert_{L^{\infty}(\partial\Omega)}\lesssim\frac{1}{{\rm dist}(\mathbf{p},\partial\Omega)}.$$
		For any $\mathbf{x}\in\overline{B_{\eta{\rm dist}(\mathbf{p},\partial\Omega)}(\mathbf{p})}$, we have $B_{\eta{\rm dist}(\mathbf{p},\partial\Omega)/2}(\mathbf{x})\subset\Omega$. The standard interior gradient estimates for harmonic functions and (\ref{estimate of L infty v gen}) give
		$$|\nabla v(\mathbf{x})|\lesssim\frac{1}{\eta\left[{\rm dist}(\mathbf{p},\partial\Omega)\right]^2}.$$
		
		If ${\rm dist}(\mathbf{p},\partial\Omega)\ge\theta\varepsilon$, for every $C_1\in(0,\theta/2)$, we have $C_1/\theta\in(0,1/2)$. Then (\ref{estimate of L infty v}) and (\ref{estimate of nabla v}) follow from (\ref{estimate of L infty v gen}) and (\ref{estimate of nabla v gen}) by setting $\eta=C_1/\theta$.
	\end{proof}
	When $\mathbf{x}$ is not near the emitter, using the triangle inequality to estimate $|\nabla r|$ is not enough, and we need a more detailed analysis. In \cite{emitter1}, the estimate of $|\nabla r|$ can be proved by a Kelvin transformation since the inclusions are circular. As for the general case, we construct the harmonic barrier to control the $L^{\infty}$ norm of $r$ locally. Then standard gradient estimates for harmonic functions give the estimates of $|\nabla r|$. The estimates are not the same as those in \cite{emitter1}, and they may not be sharp, but they are sufficient for the proofs of the main theorems.
	\begin{prop}
		Fix $M>0,\ \theta\in(0,1)$. Let $\mathbf{a}$ be a unit vector and let $\mathbf{p}=(s,t)\in\Omega_0$ satisfy $|t|\le M\sqrt{\varepsilon}$ and ${\rm dist}(\mathbf{p},\partial\Omega)\ge\theta\varepsilon$. Fix $C_1$ sufficiently small such that $B_{2C_1\varepsilon}(\mathbf{p})\subset\Omega_0$. Then there exist positive constants $A,\ A_1$ such that the following estimates hold for every $\mathbf{x}\in\Omega_0$:
		\begin{enumerate}[(i)]
			\item If $C_1\varepsilon<|\mathbf{x-p}|\le 2\sqrt{\varepsilon}$, then
			\begin{equation}\label{estimate of nabla r}
				|\nabla r(\mathbf{x})|\lesssim\frac{1}{|\mathbf{x-p}|^2}\exp{\left(-\frac{A|\mathbf{x-p}|}{\varepsilon}\right)}.
			\end{equation}
			\item If $|\mathbf{x-p}|>2\sqrt{\varepsilon}$, then
			\begin{equation}\label{estimate of nabla rr}
				|\nabla r(\mathbf{x})|\lesssim\frac{1}{\varepsilon}\exp{\left(-\frac{A_1}{\sqrt{\varepsilon}}\right)}.
			\end{equation}
		\end{enumerate}
		The constants $A,\ A_1$, and the implicit constants may additionally depend on $C_1$.
	\end{prop}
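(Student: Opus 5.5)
The starting point is that $r=\mathbf{a}\cdot\nabla\mathcal{N}_{\mathbf{p}}+v$ is harmonic in $\Omega\setminus\{\mathbf{p}\}$ and, by (\ref{equation of v}), vanishes identically on $\partial D_1\cup\partial D_2$. In the part of $\Omega_0$ near $\mathbf{p}$ the region is a thin channel of width $\sim\varepsilon$ with zero Dirichlet data on both sides. So I would prove exponential decay of $|r|$ in $|x_2-t|/\varepsilon$ with an explicit harmonic barrier. Then I would pass to gradients by interior and boundary gradient estimates on balls of radius $\sim\varepsilon$.

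\emph{Step 1: a crude $L^\infty$ bound.} For $\mathbf{x}\in\Omega_0$ with $|\mathbf{x-p}|\ge C_1\varepsilon/2$ we have $|\mathbf{a}\cdot\nabla\mathcal{N}_{\mathbf{p}}(\mathbf{x})|\lesssim|\mathbf{x-p}|^{-1}\lesssim\varepsilon^{-1}$. Together with (\ref{estimate of L infty v}) this gives $|r|\lesssim\varepsilon^{-1}$ there.

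\emph{Step 2: barrier in a strip.} Since $|t|\le M\sqrt{\varepsilon}$, on $\{|x_2|\le (M+3)\sqrt{\varepsilon}\}$ the channel has width $\varepsilon+f$-, $g$-terms $\lesssim(1+(M+3)^2)\varepsilon\le W\varepsilon$ with $W$ uniform. Consider
$$S_+\coloneqq\{\mathbf{x}\in\Omega_0:\ t+C_1\varepsilon/2<x_2<t+\sqrt{\varepsilon}\},$$
and similarly $S_-$. Set $\lambda=\pi/(3W\varepsilon)$ and take the barrier
$$w(\mathbf{x})=\frac{K}{\varepsilon}\,e^{-\lambda(x_2-t-C_1\varepsilon/2)}\cos\bigl(\lambda x_1\bigr).$$
This $w$ is exactly harmonic. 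Moreover $\cos(\lambda x_1)\ge 1/2$ on $S_+$, because $|x_1|\le W\varepsilon$ there. On the lateral boundaries we have $w>0=|r|$. On the bottom cut $w\ge K/(2\varepsilon)\ge|r|$ by Step 1, and on the top cut $w>0$, where Step 1 still bounds $|r|$ by $C/\varepsilon$. To handle the top cut I would instead place the top cut at $t+3\sqrt{\varepsilon}$ and add the symmetric term $\frac{K}{\varepsilon}e^{-\lambda(t+3\sqrt\varepsilon-x_2)}\cos(\lambda x_1)$. The maximum principle applied to $w\pm r$ then gives
$$|r(\mathbf{x})|\lesssim\varepsilon^{-1}e^{-A|x_2-t|/\varepsilon}$$
for $C_1\varepsilon\le|x_2-t|\le2\sqrt{\varepsilon}$. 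When $|x_2-t|<C_1\varepsilon$ but $|\mathbf{x-p}|>C_1\varepsilon$, we have $|\mathbf{x-p}|\lesssim\varepsilon$, so Step 1 already gives the same form. Since $|\mathbf{x-p}|\lesssim|x_2-t|+\varepsilon$ in the channel, this becomes $|r|\lesssim\varepsilon^{-1}e^{-A|\mathbf{x-p}|/\varepsilon}$.

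\emph{Step 3: gradients for (i).} For $\mathbf{x}$ as in (i), take a ball of radius $c\varepsilon$ about $\mathbf{x}$ that avoids $B_{C_1\varepsilon/2}(\mathbf{p})$. I would apply the interior gradient estimate, or the boundary one if the ball meets $\Gamma_1\cup\Gamma_2$. The boundary estimate uses $r=0$ on $\Gamma_i$ and is uniform after rescaling by $\varepsilon$, since the $C^{2,\gamma}$ norms of the rescaled boundaries stay bounded. This gives $|\nabla r(\mathbf{x})|\lesssim\varepsilon^{-2}e^{-A|\mathbf{x-p}|/\varepsilon}$. Finally, $(|\mathbf{x-p}|/\varepsilon)^2e^{-A|\mathbf{x-p}|/(2\varepsilon)}$ is bounded, so after halving $A$ we obtain (\ref{estimate of nabla r}).

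\emph{Step 4: (ii), the main obstacle.} Farther out the channel widens to $\sim x_2^2$, so the strip barrier no longer gives further decay. It suffices to propagate the size $\varepsilon^{-1}e^{-A/\sqrt{\varepsilon}}$ attained on the cuts $\{x_2=t\pm\sqrt{\varepsilon}\}\cap\Omega_0$. Let $S$ be the compact piece of $\overline{\Omega_0}$ between these two cuts. Then $\Omega\setminus\overline{S}$ is an exterior domain, since its complement $\overline{D_1\cup D_2\cup S}$ is compact. On this domain $r$ is harmonic and continuous up to the boundary, and $r\to v_0$ at infinity by (\ref{prop of v}) because $\mathbf{a}\cdot\nabla\mathcal{N}_{\mathbf{p}}\to0$. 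Its boundary values are $0$ on $\partial D_i$ and $O(\varepsilon^{-1}e^{-A/\sqrt{\varepsilon}})$ on the cuts. Lemma 3.3 therefore yields $|r|\lesssim\varepsilon^{-1}e^{-A/\sqrt{\varepsilon}}$ on all of $\Omega\setminus S$.

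If $|\mathbf{x-p}|>2\sqrt{\varepsilon}$, then $|x_2-t|\ge\frac{3}{2}\sqrt{\varepsilon}$, so $\mathbf{x}$ is at distance $\gtrsim\sqrt{\varepsilon}\ge\varepsilon$ from the cuts. A gradient estimate on a ball of radius $\sim\varepsilon$ then gives $|\nabla r|\lesssim\varepsilon^{-2}e^{-A/\sqrt{\varepsilon}}$. Absorbing one factor $\varepsilon^{-1}$ into the exponential with $A_1<A$ gives (\ref{estimate of nabla rr}).

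I expect the delicate points to be two. The first is checking that the barrier dominates $|r|$ on the top cut of the strip, which may force the two-sided exponential barrier or a slightly longer strip. The second is verifying that Lemma 3.3 applies to $r$ on the non-smooth exterior domain $\Omega\setminus\overline{S}$, where continuity at the corners is fine since $r=0$ on $\partial D_i$ is compatible with the cut data.
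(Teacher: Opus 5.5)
Your proposal is correct and follows the paper's proof: the crude bound $|r|\lesssim\varepsilon^{-1}$ away from $\mathbf{p}$, a $\cos\cdot\cosh$-type harmonic barrier in a strip of the channel with the maximum principle, gradient estimates at scale $\varepsilon$, and Lemma 3.3 on an exterior domain for part (ii). The differences are cosmetic. Your sum of two exponentials on a fixed strip is just a $\cosh$ centred at the strip's midpoint, whereas the paper recentres the strip at each $x_{02}$ and builds the factor $(x_{02}-t)^{-2}$ into the barrier instead of halving $A$. Likewise, you excise the channel piece between $x_2=t\pm\sqrt{\varepsilon}$ where the paper excises the disk $B_{\sqrt{\varepsilon}}(\mathbf{p})$.
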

	\begin{proof}
		Since ${\rm dist}(\mathbf{p},\partial\Omega)\ge\theta\varepsilon$, the definition of $r$ and (\ref{estimate of L infty v gen}) give
		\begin{equation}\label{estiamte of r}
			|r(\mathbf{x})|\lesssim\frac{1}{|\mathbf{x-p}|}+\frac{1}{{\rm dist}(\mathbf{p},\partial\Omega)}\lesssim\frac{1}{\varepsilon}
		\end{equation}
		when $|\mathbf{x-p}|>C_1\varepsilon/2$.
		
		We first prove (\ref{estimate of nabla r}). Fix a point $\mathbf{x}_0=(x_{01},x_{02})\in\Omega_0$ satisfying $$C_1\varepsilon<|\mathbf{x_0-p}|\le 2\sqrt{\varepsilon}.$$
		
		\textbf{Case 1:} $|x_{02}-t|>4C_1\varepsilon$.
		
		We only consider $x_{02}-t>4C_1\varepsilon$. The other case can be proved in the same way. Define
		$$S\coloneqq\left\{(x_1,x_2)\in\Omega_0:x_{02}-C_1\varepsilon<x_2<x_{02}+C_1\varepsilon\right\},$$
		and
		$$S_1\coloneqq\left\{(x_1,x_2)\in\Omega_0:\frac{x_{02}+t}{2}<x_2<\frac{x_{02}+t}{2}+x_{02}-t\right\}.$$
		Then $S\subset S_1$ and $\mathbf{p}\notin\bar{S_1}$.
		\begin{figure}[htbp]
			\centering
			\begin{tikzpicture}[
				>=Stealth,
				line width=0.9pt,
				line cap=round,
				line join=round
				]
				\draw
				(-2.0, 2.6)
				.. controls (-1.55, 2.05) and (-1.55, 1.35) .. (-1.55, 0.8)
				.. controls (-1.55, 0.15) and (-1.62,-0.55) .. (-1.85,-1.15)
				.. controls (-2.05,-1.65) and (-2.25,-1.95) .. (-2.45,-2.20);
				\draw
				( 2.0, 2.8)
				.. controls ( 1.55, 2.25) and ( 1.55, 1.50) .. ( 1.50, 0.85)
				.. controls ( 1.48, 0.10) and ( 1.50,-0.55) .. ( 1.72,-1.25)
				.. controls ( 1.90,-1.75) and ( 2.15,-2.05) .. ( 2.35,-2.30);
				\draw[dashed]
				(-1.52,1.65) -- (1.53,1.65);
				\draw
				(-1.53,1.05) -- (1.50,1.05);
				\draw
				(-1.55,0.05) -- (1.49,0.05);
				\draw[dashed]
				(-1.58,-0.65) -- (1.55,-0.65);
				\node at (0.00,2.02) {$S_1$};
				\node at (-1.05,0.55) {$S$};
				\fill (0.35,0.52) circle (1.5pt);
				\node[right] at (0.35,0.52) {$\mathbf{x_0}$};
				\draw[dashed] (-0.85,-1.45) circle (0.42);
				\fill (-0.85,-1.45) circle (1.4pt);
				\node[below right] at (-0.90,-1.40) {$\mathbf{p}$};
				\node at (-2.75,-1.70) {$\partial D_1$};
				\node at ( 2.65,-1.72) {$\partial D_2$};
			\end{tikzpicture}
			\caption{A configuration of $S$ and $S_1$.}
		\end{figure}
	
		Define a harmonic function
		$$h(x_1,x_2)\coloneqq\frac{C_0\varepsilon}{(x_{02}-t)^2}\exp{\frac{\lambda (t-x_{02})}{4\varepsilon}}\cos{\frac{\lambda x_1}{\varepsilon}}\cosh{\frac{\lambda(x_2-x_{02})}{\varepsilon}},$$
		where $C_0$ and $\lambda$ are constants to be determined. For $\mathbf{x}=(x_1,x_2)\in S_1$, we have
		$$|x_1|\lesssim\varepsilon+x_2^2\lesssim\varepsilon+x_{02}^2+t^2\lesssim\varepsilon,$$
		so we can choose a small constant $\lambda>0$ such that
		$$\cos{\frac{\lambda x_1}{\varepsilon}}>\frac{1}{2}.$$

		On $\partial S_1\cap(\partial D_1\cup\partial D_2)$, since $r=0$, we have $h\ge |r|$. On $\partial S_1\backslash(\partial D_1\cup\partial D_2)$, we have
		$$|x_2-x_{02}|=\frac{x_{02}-t}{2}.$$
		Hence,
		\begin{equation}\label{estimate of h}
			h(\mathbf{x})\ge\frac{C_0\varepsilon}{4(x_{02}-t)^2}\exp{\frac{\lambda (x_{02}-t)}{4\varepsilon}}.
		\end{equation}
		Since $(x_{02}-t)/\varepsilon>4C_1$, the quantity
		$$\frac{\varepsilon^2}{(x_{02}-t)^2}\exp{\frac{\lambda (x_{02}-t)}{4\varepsilon}}$$
		has a positive infimum depending only on $C_1$. It follows from (\ref{estiamte of r}) and (\ref{estimate of h}) that we can choose $C_0$ sufficiently large such that  $h(\mathbf{x})\ge|r(\mathbf{x})|$ on $\partial S_1\backslash(\partial D_1\cup\partial D_2)$. The maximum principle implies $$|r(\mathbf{x})|\le h(\mathbf{x}),\ \ \mathbf{x}\in S_1.$$
		For $\mathbf{x}\in S$, since $|x_2-x_{02}|\lesssim\varepsilon$, we have
		$$\cosh{\frac{\lambda(x_2-x_{02})}{\varepsilon}}\lesssim 1.$$
		Therefore,
		\begin{equation}\label{estimate of r improved}
			\Vert r \Vert_{L^{\infty}(S)}\lesssim\frac{\varepsilon}{(x_{02}-t)^2}\exp{\frac{\lambda (t-x_{02})}{4\varepsilon}}.
		\end{equation}
		Since the length and width of $S$ are of order $\varepsilon$, the interior or boundary gradient estimates for harmonic functions imply
		\begin{equation}\label{estimate of nabla rx0}
			|\nabla r(\mathbf{x_0})|\lesssim\frac{1}{\varepsilon}\Vert r \Vert_{L^{\infty}(S)}\lesssim\frac{1}{(x_{02}-t)^2}\exp{\frac{\lambda (t-x_{02})}{4\varepsilon}}.
		\end{equation}
		Moreover, the assumptions on $\mathbf{x_0}$ lead to
		\begin{equation}\label{d}
			x_{02}-t\sim |\mathbf{x_0-p}|.
		\end{equation}
		Combining this with (\ref{estimate of nabla rx0}) proves (\ref{estimate of nabla r}).
		
		\textbf{Case 2:} $|x_{02}-t|\le 4C_1\varepsilon$.
		
		Since $|\mathbf{x_0-p}|>C_1\varepsilon$, the distance from $\mathbf{x_0}$ to $\partial B_{C_1\varepsilon/2}(\mathbf{p})$ is at least $C_1\varepsilon/2$. Using  (\ref{estiamte of r}) and the interior or boundary gradient estimates for harmonic functions at a scale $\varepsilon$ yields
		$$|\nabla r(\mathbf{x_0})|\lesssim \frac{1}{\varepsilon^2}.$$
		Moreover, it is easy to check
		$$\varepsilon\sim|\mathbf{x_0-p}|.$$
		Hence, (\ref{estimate of nabla r}) holds.
				
		Now we prove (\ref{estimate of nabla rr}). Since $|\mathbf{x-p}|>2\sqrt{\varepsilon}$, the estimate $|x_1|\lesssim\varepsilon$ may not hold. So we cannot use the harmonic barrier directly. Define
		$$\widetilde{S}\coloneqq\{\mathbf{x}\in\Omega:|\mathbf{x-p}|>2\sqrt{\varepsilon}\},$$
		$$\widetilde{S_1}\coloneqq\{\mathbf{x}\in\Omega:|\mathbf{x-p}|>\sqrt{\varepsilon}\}.$$
		Then $\widetilde{S}\subset\widetilde{S_1}$. For any $\mathbf{x_0}\in\partial\widetilde{S_1}\backslash(\partial D_1\cup\partial D_2)$, we have
		$$|x_{02}-t|\sim\sqrt{\varepsilon}>4C_1\varepsilon.$$
		Hence, we can use the harmonic barrier argument to obtain
		$$|r(\mathbf{x_0})|\lesssim\frac{\varepsilon}{|\mathbf{x_0-p}|^2}\exp{\left(-\frac{\lambda|\mathbf{x_0-p}|}{4\varepsilon}\right)}.$$
		Since the points on $\partial\widetilde{S_1}\backslash(\partial D_1\cup\partial D_2)$ satisfy $|\mathbf{x_0-p}|=\sqrt{\varepsilon}$, and $r$ vanishes on $\partial\Omega$, we have
		$$\Vert r \Vert_{L^\infty(\partial \widetilde{S_1})}\lesssim\exp{\left(-\frac{A_1}{\sqrt{\varepsilon}}\right)}.$$
		Moreover, $r(\mathbf{x})\rightarrow v_0$ as $|\mathbf{x}|\rightarrow\infty$. Then Lemma 3.3 yields
		$$\Vert r \Vert_{L^{\infty}(\widetilde{S_1})}\lesssim\exp{\left(-\frac{A_1}{\sqrt{\varepsilon}}\right)}.$$
		For every $\mathbf{x}\in \widetilde{S}$, we have $\varepsilon+x_2^2\ge\varepsilon$, and the distance from $\mathbf{x}$ to $\partial \widetilde{S_1}\setminus(\partial D_1\cup\partial D_2)$ is at least $\sqrt{\varepsilon}$. So we can use the interior or boundary gradient estimates for harmonic functions at a scale $\varepsilon$. Then we obtain (\ref{estimate of nabla rr}).
	\end{proof}

	\section{Proofs of Theorems 1.1 and 1.2}
	\renewcommand{\proofname}{Proof of Theorem 1.1.\nopunct}
	\begin{proof}
		Recall $u=Q+r$, where
		$$Q(\mathbf{x})=cq(\mathbf{x})-v_0,\ \ r(\mathbf{x})=\mathbf{a}\cdot\nabla\mathcal{N}_{\mathbf{p}}(\mathbf{x})+v(\mathbf{x}).$$
		We just need to combine the previous results together.
		
		\textbf{Near the emitter.}
		If $|\mathbf{x-p}|\le C_1\varepsilon$, then (\ref{estimate of nabla qq}) and (\ref{estimate of c2}) give
		\begin{equation}\label{estimate of nabla Q}
			|\nabla Q(\mathbf{x})|\sim\frac{|a_1|}{\sqrt{\varepsilon}(\varepsilon+x_2^2)}.
		\end{equation}
		A direct calculation gives
		\begin{equation}\label{estimate of aNp}
			|\nabla(\mathbf{a}\cdot\nabla\mathcal{N}_{\mathbf{p}})(\mathbf{x})|=\frac{1}{2\pi|\mathbf{x-p}|^2}\ge\frac{1}{2\pi C_1^2\varepsilon^2}.
		\end{equation}
		Recall
		\begin{equation}\label{estimate of nabla v Section 4}
			|\nabla v|\lesssim\frac{1}{C_1\varepsilon^2},
		\end{equation}
		so we can choose $C_1$ sufficiently small such that $|\nabla(\mathbf{a}\cdot\nabla\mathcal{N}_{\mathbf{p}})|$ is the leading term of $|\nabla u|$. This proves (\ref{thm1}).
		
		\textbf{Far from the emitter.}
		Set
		$$C_2=\frac{2}{A},$$
		where $A$ is the constant in Proposition 3.4.
		
		For (\ref{thm2}), since $|x_2|\le c_0\varepsilon^{1/4}$, (\ref{estimate of nabla Q}) still holds. If $|\mathbf{x-p}|\le 2\sqrt{\varepsilon}$, since $|\mathbf{x-p}|\ge C_2\varepsilon|\log{\varepsilon}|$, (\ref{estimate of nabla r}) implies
		\begin{equation}\label{estimate of nabla r in Section 4}
			|\nabla r(\mathbf{x})|\lesssim\frac{1}{|\log{\varepsilon}|^2}\ll\frac{1}{\sqrt{\varepsilon}(\varepsilon+x_2^2)}.
		\end{equation}
		If $|\mathbf{x-p}|\ge 2\sqrt{\varepsilon}$, (\ref{estimate of nabla rr}) implies
		\begin{equation}\label{estiamte of nabla rr in Section 4}
			|\nabla r(\mathbf{x})|\lesssim\frac{1}{\varepsilon}\exp{\left(-\frac{A_1}{\sqrt{\varepsilon}}\right)}\ll\frac{1}{\sqrt{\varepsilon}(\varepsilon+x_2^2)},
		\end{equation}
		Combining (\ref{estimate of nabla Q}) with (\ref{estimate of nabla r in Section 4}) and (\ref{estiamte of nabla rr in Section 4}) proves (\ref{thm2}).
		
		As for $(\ref{thm3})$, since $|x_2|>c_0\varepsilon^{1/4}$, we see from (\ref{estimate of nabla q}) and (\ref{estiamte of c1}) that
		\begin{equation}\label{estimate of QQ}
			|\nabla Q(\mathbf{x})|\lesssim\frac{1}{\sqrt{\varepsilon}(\varepsilon+x_2^2)}.
		\end{equation}
		Combining this with (\ref{estiamte of nabla rr in Section 4}) proves (\ref{thm3}).
		
		\textbf{Transition region.}
		(\ref{thm4}) can be proved by (\ref{estimate of QQ}) and (\ref{estimate of nabla r}).
	\end{proof}
	Now we consider Theorem 1.2.
	\renewcommand{\proofname}{Proof of Theorem 1.2.\nopunct}
	\begin{proof}
		By Proposition 3.2, $|c|\lesssim\varepsilon^{-1/2}$. Therefore,
		\begin{equation}\label{estimate of nabla Q 01}
			|\nabla Q(\mathbf{x})|\lesssim\frac{1}{\varepsilon+x_2^2}.
		\end{equation}
	
		\textbf{Near the emitter.}
		By (\ref{estimate of aNp}), (\ref{estimate of nabla v Section 4}) and (\ref{estimate of nabla Q 01}), we can choose $C_1$ sufficiently small such that $|\nabla(\mathbf{a}\cdot\nabla\mathcal{N}_{\mathbf{p}})|$ is the leading term of $|\nabla u|$.
		 
		\textbf{Far from the emitter.}
		As in the proof of Theorem 1.1, we can choose $C_2$ such that
		$$|\nabla r(\mathbf{x})|\ll\frac{1}{\varepsilon+x_2^2}.$$
		Combining this with (\ref{estimate of nabla Q 01}) implies (\ref{thm22}).
		
		\textbf{Transition region.}
		(\ref{thm33}) can be proved by (\ref{estimate of nabla Q 01}) and (\ref{estimate of nabla r}).
	\end{proof}
	\begin{rk}
		Under the assumptions of Theorem 1.2, if $\mathbf{p}=(0,t)$ and $D_1\cup D_2$ is symmetric with respect to the $x_2$-axis, then $q$ is odd in $x_1$. For $\mathbf{a}=(0,1)$, we have
		$$\mathbf{a}\cdot\nabla q(\mathbf{p})=0,$$
		so $c=0$. Hence,
		$$\nabla u=\nabla r.$$
		The estimates in Theorem 1.2 can therefore be improved. For example, if $|t|\le\sqrt{\varepsilon}$ and $\mathbf{x}\in\Omega_0$ satisfies $|\mathbf{x}|>2\sqrt{\varepsilon}$, then $|\mathbf{x-p}|\ge\sqrt{\varepsilon}$. It follows from (\ref{estimate of nabla r}) and (\ref{estimate of nabla rr}) that
		$$|\nabla u(\mathbf{x})|\lesssim\frac{1}{\varepsilon}\exp{\left(-\frac{A*}{\sqrt{\varepsilon}}\right)}\lesssim\exp{\left(-\frac{A*}{2\sqrt{\varepsilon}}\right)},$$
		where $A*>0$ is a constant independent of $\varepsilon$. This shows $|\nabla u|$ decays exponentially as $\varepsilon$ tends to $0$, which is the same as in \cite{emitter1}.
	\end{rk}
	If the condition ${\rm dist}(\mathbf{p},\partial\Omega)\ge\theta\varepsilon$ does not hold, Proposition 3.3 shows that $|v|$ and $|\nabla v|$ can be larger, and we may need more precise methods to estimate $|\nabla v|$ and $|\nabla r|$. However, we can still get the estimate for the points near the emitter.
	\begin{prop}
		Fix $M>0$. Then there exist constants $\eta\in(0,1/2),\ \varepsilon_0>0$ such that for every $0<\varepsilon\le\varepsilon_0$, every $\mathbf{p}=(s,t)\in\Omega_0$ satisfying $|t|\le M\sqrt{\varepsilon}$, every unit vector $\mathbf{a}$, and every $\mathbf{x}\in\Omega_0$ satisfying $0<|\mathbf{x-p}|\le\eta{\rm dist}(\mathbf{p},\partial\Omega)$,
		we have
		$$|\nabla u(\mathbf{x})|\sim\frac{1}{|\mathbf{x-p}|^2}.$$
	\end{prop}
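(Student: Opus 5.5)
We use the decomposition $u=Q+r=\mathbf{a}\cdot\nabla\mathcal{N}_{\mathbf{p}}+v+cq-v_0$ from (\ref{u}). The plan is to show that $\nabla(\mathbf{a}\cdot\nabla\mathcal{N}_{\mathbf{p}})$ dominates the other two contributions $\nabla v$ and $c\nabla q$ once $\eta$ is small. Write $d\coloneqq{\rm dist}(\mathbf{p},\partial\Omega)$. Since $\mathbf{p}\in\Omega_0$, we have $d\lesssim\varepsilon+t^2\lesssim\varepsilon$, because $|t|\le M\sqrt{\varepsilon}$. By the direct calculation (\ref{estimate of aNp}), for $0<|\mathbf{x-p}|\le\eta d$ we get
$$|\nabla(\mathbf{a}\cdot\nabla\mathcal{N}_{\mathbf{p}})(\mathbf{x})|=\frac{1}{2\pi|\mathbf{x-p}|^2}\ge\frac{1}{2\pi\eta^2d^2}.$$

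For the $v$ term, we invoke (\ref{estimate of nabla v gen}) of Proposition 3.3. Its implicit constant is independent of $\eta$, and it gives $|\nabla v(\mathbf{x})|\le C/(\eta d^2)$. Compared with the lower bound above, this is smaller by a factor $C\eta$, so it can be absorbed.

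For the $Q$ term, we have $\nabla Q=c\nabla q$. By (\ref{estiamte of c1}), $|c|\lesssim(\varepsilon+t^2)^{-1}\lesssim\varepsilon^{-1}$. By (\ref{estimate of nabla q}), $|\nabla q(\mathbf{x})|\lesssim\sqrt{\varepsilon}/(\varepsilon+x_2^2)\lesssim\varepsilon^{-1/2}$. Together these give $|\nabla Q(\mathbf{x})|\lesssim\varepsilon^{-3/2}$. Since $d\lesssim\varepsilon$, we have $\varepsilon^{-3/2}\lesssim\varepsilon^{1/2}d^{-2}$, which is much smaller than $1/(\eta^2d^2)$ once $\varepsilon\le\varepsilon_0$ is small. (Even without smallness of $\varepsilon$ the ratio is $\lesssim\eta^2\sqrt{\varepsilon}$, so it is harmless.)

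Combining the three bounds,
$$\Bigl||\nabla u(\mathbf{x})|-\frac{1}{2\pi|\mathbf{x-p}|^2}\Bigr|\le\Bigl(C\eta+C\eta^2\sqrt{\varepsilon}\Bigr)\frac{1}{|\mathbf{x-p}|^2},$$
where we used $1/(\eta d^2)\le\eta/|\mathbf{x-p}|^2$ and $\varepsilon^{-3/2}\lesssim\eta^2\sqrt{\varepsilon}/|\mathbf{x-p}|^2$. Choosing $\eta$ so that the bracket is at most $1/(4\pi)$ yields $|\nabla u(\mathbf{x})|\sim|\mathbf{x-p}|^{-2}$.

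The only delicate point is to check that no constant depends on $\eta$ or on $d$. The constant in (\ref{estimate of nabla v gen}) is $\eta$-free by Proposition 3.3. The bounds on $c$ and $\nabla q$ hold uniformly for all $\mathbf{p}\in\Omega_0$, with no lower bound on $d$ required.
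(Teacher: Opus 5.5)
Your proof is correct and follows the paper's argument. Both use the decomposition $u=\mathbf{a}\cdot\nabla\mathcal{N}_{\mathbf{p}}+v+Q$, the $\eta$-independent bound $|\nabla v|\lesssim 1/(\eta\,[{\rm dist}(\mathbf{p},\partial\Omega)]^2)$ from Proposition 3.3, and $|\nabla Q|\lesssim\varepsilon^{-3/2}$ from Propositions 3.1 and 3.2, then choose $\eta$ small. The only cosmetic difference is that the paper writes $\varepsilon^{-3/2}\lesssim[{\rm dist}(\mathbf{p},\partial\Omega)]^{-3/2}$ where you write $\varepsilon^{-3/2}\lesssim\varepsilon^{1/2}[{\rm dist}(\mathbf{p},\partial\Omega)]^{-2}$; both rest on ${\rm dist}(\mathbf{p},\partial\Omega)\lesssim\varepsilon$, which you state explicitly and the paper uses implicitly.
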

	\renewcommand{\proofname}{Proof.\nopunct}
	\begin{proof}
		By (\ref{estimate of nabla q}) and (\ref{estiamte of c1}), we have
		$$|\nabla Q|\lesssim\varepsilon^{-3/2}\lesssim\left[{\rm dist}(\mathbf{p},\partial\Omega)\right]^{-3/2},$$
		while (\ref{estimate of nabla v gen}) implies
		$$|\nabla v(\mathbf{x})|\lesssim\frac{1}{\eta\left[{\rm dist}(\mathbf{p},\partial\Omega)\right]^2}.$$
		Recall
		$$|\nabla(\mathbf{a}\cdot\nabla\mathcal{N}_{\mathbf{p}})|=\frac{1}{2\pi|\mathbf{x-p}|^2}\ge\frac{1}{2\pi\eta^2\left[{\rm dist}(\mathbf{p},\partial\Omega)\right]^2}.$$
		Then we can choose $\eta$ sufficiently small such that $|\nabla(\mathbf{a}\cdot\nabla\mathcal{N}_{\mathbf{p}})|$ is the leading term of $|\nabla u|$.
	\end{proof}

	\section{Effect of Emitter Location on the Field}
	\renewcommand{\proofname}{Proof of Theorem 1.3.\nopunct}
	\begin{proof}
		By (\ref{estiamte of c1}),
		$$|c|\lesssim\frac{1}{\varepsilon+t^2}.$$
		For $\mathbf{x}\in L$, (\ref{estimate of nabla qq}) gives
		$$|\nabla q(\mathbf{x})|\sim\frac{1}{\sqrt{\varepsilon}}.$$
		Hence,
		\begin{equation}\label{estimate of Q 5}
			|\nabla Q(\mathbf{x})|=|c||\nabla q(\mathbf{x})|\lesssim\frac{1}{\sqrt{\varepsilon}(\varepsilon+t^2)}.
		\end{equation}
		Let
		$$\hat{S}\coloneqq\{\mathbf{x}\in\Omega_0:\ |x_2|<\varepsilon\},$$
		$$\hat{S_1}\coloneqq\{\mathbf{x}\in\Omega_0:\ |x_2|<\sqrt{\varepsilon}\}.$$
		Note that $|t|>4\sqrt{\varepsilon}$, so $\mathbf{p}\notin\overline{\hat{S_1}}$. Hence $r$ is harmonic in $\hat{S_1}$. Define a harmonic function
		$$h_0(x_1,x_2)\coloneqq \frac{\bar{C}}{{\rm dist}(\mathbf{p},\partial\Omega)}\exp{\left(-\frac{\mu}{\sqrt{\varepsilon}}\right)}\cos{\frac{\mu x_1}{\varepsilon}}\cosh{\frac{\mu x_2}{\varepsilon}},$$
		where $\bar{C}>0$ and $\mu>0$ are constants to be determined.

		On $\partial\hat{S_1}\setminus(\partial D_1\cup\partial D_2)$, we have $|x_2|=\sqrt{\varepsilon}$, so
		$$\exp{\left(-\frac{\mu}{\sqrt{\varepsilon}}\right)}\cosh{\frac{\mu x_2}{\varepsilon}}>\frac{1}{2}.$$
		Choose $\mu$ such that for all $\mathbf{x}\in\hat{S_1}$,
		$$\cos{\frac{\mu x_1}{\varepsilon}}>\frac{1}{2}.$$
		Then
		$$h_0(x_1,x_2)\ge\frac{\bar{C}}{4{\rm dist}(\mathbf{p},\partial\Omega)},\ \ |x_2|=\sqrt{\varepsilon}.$$
		Since
		$${\rm dist}(\mathbf{p},\partial\Omega)\lesssim\varepsilon+t^2\lesssim |t|,$$
		we obtain
		$$|r(\mathbf{x})|\lesssim||t|-\sqrt{\varepsilon}|^{-1}+\left[{\rm dist}(\mathbf{p},\partial\Omega)\right]^{-1}\lesssim\left[{\rm dist}(\mathbf{p},\partial\Omega)\right]^{-1}.$$
		Therefore, we can choose $\bar{C}$ sufficiently large such that $|r|\le h_0$ on $\partial\hat{S_1}\setminus(\partial D_1\cup\partial D_2)$.
		
		On $\partial\hat{S_1}\cap(\partial D_1\cup\partial D_2)$, since $r=0$, we have $|r|\le h_0$. So $|r|\le h_0$ on $\partial\hat{S_1}$. Then the maximum principle  implies
		$$|r(\mathbf{x})|\le h_0(\mathbf{x}),\ \ \mathbf{x}\in\hat{S_1}.$$
		In particular, in the smaller strip $\hat{S}$, we have 
		$$\Vert r\Vert_{L^{\infty}(\hat{S})}\lesssim\frac{1}{{\rm dist}(\mathbf{p},\partial\Omega)}\exp{\left(-\frac{A}{\sqrt{\varepsilon}}\right)}.$$
		The interior or boundary gradient estimates for harmonic functions at a scale $\varepsilon$ imply
		\begin{equation}\label{5 estimate of nabla r}
			|\nabla r(\mathbf{x})|\lesssim \frac{1}{\varepsilon{\rm dist}(\mathbf{p},\partial\Omega)}\exp{\left(-\frac{A}{\sqrt{\varepsilon}}\right)},\ \mathbf{x}\in L.
		\end{equation}
		Then (\ref{estimate of Q 5}) and (\ref{5 estimate of nabla r}) imply (\ref{location estimate1}).
		
		If ${\rm dist}(\mathbf{p},\partial\Omega)\ge\theta\varepsilon$, then
		$$|\nabla r|\lesssim\varepsilon^{-2}\exp{\left(-\frac{A}{\sqrt{\varepsilon}}\right)}\ll\frac{1}{\sqrt{\varepsilon}(\varepsilon+t^2)}.$$
		Hence, (\ref{location estimate2}) holds.
	\end{proof}
	\begin{rk}
		Under the assumptions of Theorem 1.3, suppose $D_1$ and $D_2$ are disks with the same radius $\delta_0$ and $\mathbf{p}=(0,t)$. Then $q=q_B$, and $q_B$ is odd in $x_1$. Therefore
		$$|c|\sim\frac{|\mathbf{a}\cdot\nabla q_B(\mathbf{p})|}{\sqrt{\varepsilon}}\sim\frac{|a_1\partial_{x_1} q_B(\mathbf{p})|}{\sqrt{\varepsilon}}\sim\frac{1}{\varepsilon+t^2}.$$
		Thus,
		$$|\nabla Q|\sim\frac{1}{\sqrt{\varepsilon}(\varepsilon+t^2)},\ \mathbf{x}\in L.$$
		Since
		$${\rm dist}(\mathbf{p},\partial\Omega)=\sqrt{(\delta_0+\varepsilon/2)^2+t^2}-\delta_0\sim\varepsilon+t^2,$$
		we see from (\ref{5 estimate of nabla r}) that
		$$|\nabla r|\lesssim\varepsilon^{-2}\exp{\left(-\frac{A}{\sqrt{\varepsilon}}\right)}\ll\frac{1}{\sqrt{\varepsilon}(\varepsilon+t^2)}.$$
		Consequently,
		$$|\nabla u(\mathbf{x})|\sim\frac{1}{\sqrt{\varepsilon}(\varepsilon+t^2)},\ \mathbf{x}\in L.$$
		This shows the estimate (\ref{location estimate2}) is optimal.
	\end{rk}

	\appendix
	
	\section{Exterior Dirichlet Problem in Section 2}
	
	\begin{prop}
		Under the assumptions in Section 1 and Section 2, problem (\ref{equation of v}) has a unique solution $v$. Moreover, $v$ satisfies (\ref{prop of v}).
	\end{prop}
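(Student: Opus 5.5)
The plan is to reduce problem (\ref{equation of v}) to a bounded planar Dirichlet problem by a Kelvin transformation. The boundary datum $\varphi\coloneqq-\mathbf{a}\cdot\nabla\mathcal{N}_{\mathbf{p}}$ restricted to $\partial D_1\cup\partial D_2$ is smooth, since $\mathbf{p}\in\Omega$ lies at positive distance from $\partial\Omega$.

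\textbf{Reduction and existence.} Fix a point $\mathbf{z}\in D_1$ and a radius $\rho>0$ with $B_\rho(\mathbf{z})\subset D_1$. Consider the inversion
$$\Phi(\mathbf{x})=\mathbf{z}+\rho^2\frac{\mathbf{x}-\mathbf{z}}{|\mathbf{x}-\mathbf{z}|^2}.$$
It maps $\Omega\cup\{\infty\}$ onto a bounded domain $\widetilde\Omega\cup\{\mathbf{z}\}$, where $\widetilde\Omega=\Phi(\Omega)$ is bounded with $C^{2,\gamma}$ boundary $\Phi(\partial D_1)\cup\Phi(\partial D_2)$, and $\mathbf{z}$ is an interior point of $\widetilde\Omega\cup\{\mathbf z\}$. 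In two dimensions the Kelvin transform preserves harmonicity without a weight factor: $w$ is harmonic in $\Omega$ iff $w\circ\Phi$ is harmonic in $\Phi(\Omega)$. The Dirichlet energy is also conformally invariant.

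Let $\tilde v$ solve the classical Dirichlet problem in the bounded domain $\widehat\Omega\coloneqq\widetilde\Omega\cup\{\mathbf{z}\}$ with datum $\varphi\circ\Phi$ on $\partial\widehat\Omega$. This datum is $C^{2,\gamma}$, so $\tilde v\in C^{2,\gamma}(\overline{\widehat\Omega})$ exists by standard Schauder theory. Set $v\coloneqq\tilde v\circ\Phi$. Then $v$ is harmonic in $\Omega$ and equals $\varphi$ on $\partial\Omega$. By conformal invariance, $\int_\Omega|\nabla v|^2=\int_{\widetilde\Omega}|\nabla\tilde v|^2<\infty$.

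\textbf{Uniqueness.} Let $w$ be the difference of two solutions, so $w$ is harmonic in $\Omega$, vanishes on $\partial\Omega$, and has finite energy. Then $\tilde w=w\circ\Phi$ is harmonic in $\widetilde\Omega=\widehat\Omega\setminus\{\mathbf z\}$ with finite Dirichlet energy near $\mathbf z$. This is the step needing care. A harmonic function in a punctured disk has the expansion $\alpha\log|\mathbf{y}-\mathbf z|+\sum_{k\neq0}(\cdots)|\mathbf{y}-\mathbf{z}|^k e^{ik\phi}$. The $\log$ term and the negative powers each have infinite energy near $\mathbf z$, and they are orthogonal in $L^2$ of annuli. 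Hence finite energy forces all of them to vanish, and $\mathbf z$ is a removable singularity. So $\tilde w$ is harmonic in $\widehat\Omega$ with zero boundary values, and $\tilde w\equiv0$ by the maximum principle. This proves uniqueness. The same argument shows that the solution constructed above is the only candidate.

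\textbf{Behavior at infinity (\ref{prop of v}).} Set $v_0\coloneqq\tilde v(\mathbf z)$. Since $\tilde v$ is continuous at $\mathbf z$, $v(\mathbf x)\to v_0$ as $|\mathbf x|\to\infty$. For the gradient, $\tilde v$ is smooth near $\mathbf z$, so $|\nabla\tilde v|\le C$ there. The chain rule gives $\nabla v(\mathbf x)=D\Phi(\mathbf x)^{T}\nabla\tilde v(\Phi(\mathbf x))$, and $|D\Phi(\mathbf x)|=\rho^2|\mathbf x-\mathbf z|^{-2}$. Therefore $|\nabla v(\mathbf x)|\lesssim|\mathbf x|^{-2}$ for large $|\mathbf x|$.

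As a consistency check, the flux identity used in Section 2 follows from this decay: the boundary integral over $\partial B_R$ is $O(R\cdot R^{-2})\to0$. The main obstacle is the removable-singularity argument under only the finite-energy hypothesis. I would handle it through the Laurent/Fourier expansion as above, or alternatively via the capacity-zero argument that points are $H^1$-removable in two dimensions.
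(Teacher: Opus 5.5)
Your proof is correct, but it is organized differently from the paper's.

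The paper gets existence and uniqueness from Lax--Milgram. It extends the datum by a compactly supported $\phi$ and solves for $\bar v=v-\phi$ in the energy space $H$ of functions with $\nabla\psi\in L^2(\Omega)$ vanishing on $\partial D_1\cup\partial D_2$, then appeals to regularity theory for a classical solution. Only afterwards does it apply the Kelvin transform $\mathbf x\mapsto\mathbf x/|\mathbf x|^2$. It then shows, via the Laurent expansion of the holomorphic function $\partial_{x_1}\tilde v-i\partial_{x_2}\tilde v$, that finite energy kills all negative powers, which gives (\ref{prop of v}).

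You instead invert about a point inside $D_1$ from the start. This turns the exterior problem into a classical Dirichlet problem on a bounded doubly connected $C^{2,\gamma}$ domain, in which the point at infinity becomes the interior point $\mathbf z$. That gives you three things at once:
\begin{enumerate}
\item existence and $C^{2,\gamma}$ regularity directly from Schauder theory;
\item finite energy for free, by conformal invariance;
\item (\ref{prop of v}) immediately, from the smoothness of $\tilde v$ at $\mathbf z$.
\end{enumerate}
The removable-singularity argument is then needed only for uniqueness. You also avoid having to check that $H$ is complete, which requires a Poincar\'e-type inequality using the zero boundary values.

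The paper's variational existence and uniqueness step, by contrast, does not rely on two-dimensional conformal invariance. Its asymptotic step, however, is just as two-dimensional as yours.

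The removability step is the same in spirit in both: finite Dirichlet energy near the image of infinity rules out the $\log$ term and all singular modes. Your expansion is written schematically, so be precise about one point in the real Fourier version. For each frequency $n\ge 1$, both $r^{n}$ and $r^{-n}$ occur, and what is orthogonal on annuli is their gradients. After integrating in the angle, the cross term in the energy cancels, and the mode $(a_nr^n+b_nr^{-n})\cos n\phi$ contributes $2\pi n^2\int(a_n^2r^{2n-1}+b_n^2r^{-2n-1})\,dr$. This diverges at $r=0$ unless $b_n=0$, which is exactly what your argument needs.
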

	\renewcommand{\proofname}{Proof.\nopunct}
	\begin{proof}
		We use the Lax--Milgram theorem to clarify the existence and uniqueness of $v$. Choose $\phi\in C_c^{\infty}(\mathbb{R}^2)$ such that
		$$\phi=-\mathbf{a}\cdot\nabla\mathcal{N}_{\mathbf{p}}\ \ {\rm on}\ \partial D_1\cup\partial D_2.$$
		It suffices to prove the existence and uniqueness of the following problem:
		\begin{equation}\label{equation of v aux}
			\begin{cases}
				\Delta\bar{v}=-\Delta\phi\ \ {\rm in}\ \Omega,\\
				\bar{v}=0\ {\rm on}\ \partial D_1\cup\partial D_2,\\
				\displaystyle\int_{\Omega}|\nabla\bar{v}|^2{\rm d}\mathbf{x}<\infty.
			\end{cases}
		\end{equation}
		Define
		$$H\coloneqq\{\psi:\psi\in H^1(\Omega\cap B_R)\ {\rm for}\ {\rm all}\ {\rm large}\ R,\ \nabla \psi\in L^2(\Omega),\ \psi=0\ {\rm on}\ \partial D_1\cup\partial D_2\}$$
		and consider the inner product
		$$(\psi_1,\psi_2)\coloneqq\int_{\Omega}\nabla\psi_1\cdot\nabla\psi_2\ {\rm d}\mathbf{x}.$$
		We can check $H$ is a Hilbert space. Consider the bilinear map
		$$B[\psi_1,\psi_2]\coloneqq(\psi_1,\psi_2),\ \ \psi_1,\ \psi_2\in H,$$
		which is bounded and coercive. Then for the bounded linear functional
		$$\mathcal{F}(\psi)\coloneqq-(\phi,\psi),\ \ \psi\in H,$$
		the Lax--Milgram theorem implies that there exists a unique $\bar{v}\in H$ such that
		$$B[\bar{v},\psi]=\mathcal{F}(\psi),\ {\rm for}\ {\rm every}\ \psi\in H.$$
		Then we get a unique weak solution of (\ref{equation of v aux}). The regularity theory shows that such a solution is classical. Therefore, the existence and uniqueness of $v$ are proved.
		
		We next prove (\ref{prop of v}). Choose $R>0$ such that $\mathbb{R}^2\setminus\Omega\subset B_R$, and consider the Kelvin transformation
		$$\Phi(\mathbf{x})=\frac{\mathbf{x}}{|\mathbf{x}|^2}.$$
		Define
		$$\tilde{v}(\mathbf{x})\coloneqq v(\Phi(\mathbf{x})),\ \ \mathbf{x}\in B_{1/R}\setminus\{\mathbf{0}\}.$$
		One can check
		$$\int_{B_{1/R}\setminus\{\mathbf{0}\}}|\nabla\tilde{v}|^2\ {\rm d}\mathbf{x}=\int_{\mathbb{R}^2\setminus B_R}|\nabla v|^2\ {\rm d}\mathbf{x}<\infty.$$
		We use complex analysis methods to show the origin is a removable singularity of $\tilde{v}$. Denote
		$$z=x_1+ix_2.$$
		Since $\tilde{v}$ is harmonic in $B_{1/R}\setminus\{\mathbf{0}\}$, we can define a holomorphic function
		$$F(z)=\partial_{x_1}\tilde{v}-i\partial_{x_2}\tilde{v},\ \ z\in B_{1/R\setminus\{\mathbf{0}\}}.$$
		Consider the Laurent expansion
		$$F(z)=\sum_{n=-\infty}^{\infty}\alpha_nz^n.$$
		A direct calculation implies
		$$\int_{B_{1/R}\setminus\{\mathbf{0}\}}|\nabla\tilde{v}|^2\ {\rm d}\mathbf{x}=2\pi\sum_{n=-\infty}^{\infty}|\alpha_n|^2\int_0^{1/R}y^{2n+1}\ {\rm d}y.$$
		The finite-energy condition then implies $\alpha_n=0$ for $n<0$. Hence, $\mathbf{0}$ is a removable singularity of $F$, and consequently also of $\nabla\tilde{v}$. Then $|\nabla\tilde{v}|$ is bounded near the origin. By the chain rule, we obtain
		$$|\nabla v(\mathbf{x})|=O(|\mathbf{x}|^{-2})\ \ {\rm as}\ |\mathbf{x}|\rightarrow\infty.$$
		Moreover, by the boundedness of $|\nabla\tilde{v}|$ near the origin, $\mathbf{0}$ is also a removable singularity of $\tilde{v}$, so there exists $v_0$ such that
		$$v\rightarrow v_0\ \ {\rm as}\ |\mathbf{x}|\rightarrow\infty.$$
	\end{proof}

\end{document}